\theoremstyle{remark}{
\newtheorem{Def}{{\rm Definition}}
\newtheorem{Ex}{{\rm Example}}

\newtheorem{Prob}{{\rm Problem}}

}
\theoremstyle{plain}
{
\newtheorem{Cor}{Corollary}
\newtheorem{Prop}{Proposition}
\newtheorem{Thm}{Theorem}

}
\begin{document}
\title[Arrangements of circles supported by small chords.]{Arrangements of circles supported by small chords and compatible with natural real algebraic functions}
\author{Naoki kitazawa}
\keywords{Singularity theory (of Morse-Bott functions). Arrangements (of circles). (Non-singular) real algebraic manifolds and real algebraic maps. Poincar\'e-Reeb Graphs. Reeb graphs. Circles in plane geometry. \\
\indent {\it \textup{2020} Mathematics Subject Classification}: Primary~14P05, 14P10, 52C15, 57R45. Secondary~ 58C05.}

\address{Institute of Mathematics for Industry, Kyushu University, 744 Motooka, Nishi-ku Fukuoka 819-0395, Japan\\
 TEL (Office): +81-92-802-4402 \\
 FAX (Office): +81-92-802-4405 \\
}
\email{n-kitazawa@imi.kyushu-u.ac.jp, naokikitazawa.formath@gmail.com}
\urladdr{https://naokikitazawa.github.io/NaokiKitazawa.html}
\maketitle
\begin{abstract}
We have previously proposed a study of arrangements of small circles which also surround regions in the plane realized as the images of natural real algebraic maps yielding {\it Morse-Bott} functions by projections. Among studies of arrangements, families of smooth regular submanifolds in smooth manifolds, this study is fundamental, explicit, and new, surprisingly.

We have obtained a complete list of local changes of the graphs the regions naturally collapse to in adding a (generic) small circle to an existing arrangement of the proposed class. Here, we propose a similar and essentially different class of arrangements of circles. The present study also yields real algebraic maps and nice real algebraic functions similarly and we present a similar study.

We are interested in topological properties and combinatorics among such arrangements and regions and applications to constructing such real algebraic maps and manifolds explicitly and understanding their global structures.

\end{abstract}
\section{Introduction.}
\label{sec:1}

Arrangements of linear (affine) subspaces, circles, and, more generally, smooth regular submanifolds, in Euclidean spaces, have produced various studies in various mathematics and we cannot explain systematically or explicitly related studies well. \cite{tamaki2, tamaki3} are from webpages \cite{tamaki1}, explaining various topics on mathematics: most of the exposition is presented respecting algebraic topology, mainly. Dai Tamaki is responsible for them mainly, and various mathematicians support. \cite{tamaki2, tamaki3} are for arrangements. Our study is concerned with circles. As a related study \cite{carmesinschulz} ({\cite[Section 1 Related work]{carmesinschulz}}) implies, it is also difficult to explain related studies in the case of circles.

Including the presentation ever, presentations in the present paper respect \cite{kitazawa3} considerably. 
We refer to \cite{kitazawa3} as a closely related study in various scenes where we do not assume mathematical arguments there. 

Previously, in \cite{kitazawa3}, we have proposed several 
explicit classes of arrangements of small circles, surrounding regions in the plane realized as the images of natural real algebraic maps yielding {\it Morse-Bott} functions by composing projections. As our main result there, we have classified local changes of the graphs the regions naturally collapse to in adding a (generic) small circle to an existing arrangement of the proposed class. We propose similar and new classes in our present paper.

Originally, we have been interested in construction of such real algebraic functions, maps and manifolds explicitly and understanding their global structures. Such interest comes from, for example, geometric theory of Morse functions, explicit construction of functions, maps and manifolds in singularity theory of smooth maps, and construction in real algebraic geometry. We do not assume advanced knowledge or experience on related geometry and singularity theory. We only introduce related books and articles. We concentrate on arrangements of circles and regions surrounded by the circles. We present important classes of arrangements of circles. Before this, we explain fundamental notation we need.
\subsection{Notation on topological spaces, manifolds and maps.}
We present fundamental notation. Let ${\mathbb{R}}^n$ denote the $n$-dimensional Euclidean space, which is a smooth manifold equipped with the standard Euclidean metric, and $||x|| \geq 0$ the distance between $x \in {\mathbb{R}}^n$ and the origin $0 \in {\mathbb{R}}^n$. We use $\mathbb{R}:={\mathbb{R}}^1$. Let ${\pi}_{m,n,i}:{\mathbb{R}}^m \rightarrow {\mathbb{R}}^n$ with $m>n \geq 1$ denote the canonical projection: let $x=(x_1,x_2) \in {\mathbb{R}}^{n} \times {\mathbb{R}}^{m-n}={\mathbb{R}}^m$ and ${\pi}_{m,n,i}(x)=x_i$ ($i=1,2$). Let $D^k:=\{x \in {\mathbb{R}}^k \mid ||x|| \leq 1\}$ denote the $k$-dimensional unit disk and $S^k:=\{x \in {\mathbb{R}}^{k+1} \mid ||x||=1\}$ the $k$-dimensional unit sphere. For a subspace $Y \subset X$ of a topological space $X$, let $\overline{Y}$ denote the closure and $Y^{\circ}$ the interior of $Y$. For $Y, \overline{Y}, Y^{\circ} \subset X$, we do not refer to the outer space $X$ unless otherwise stated. For a (topological) manifold $X$ whose boundary is non-empty, let $\partial X$ denote the boundary.  
\subsection{Our arrangements of circles.}

\label{subsec:1.2}
We first review some from \cite{kitazawa2, kitazawa3}.

\begin{Def}
\label{def:1}
For a circle $S _{x_0,r}:=\{x \mid ||x-x_0||=r\} \subset {\mathbb{R}}^2$ ($x_0=(x_{0,1},x_{0,2}) \in {\mathbb{R}}^2$, $r>0$), the points $(x_{0,1} \pm r,x_{0,2})$ {\rm (}$(x_{0,1},x_{0,2} \pm r)${\rm )} are {\it vertical} {\rm (}resp. {\it horizontal}{\rm )} poles.
\end{Def}
\begin{Def}
\label{def:2}
A pair of a family $\mathcal{S}=\{S _{x_j,r_j}:=\{x_j \mid ||x_j-x_{j,0}||=r_j\} \subset {\mathbb{R}}^2\}$ of circles satisfying the following and a region $D_{\mathcal{S}} \subset {\mathbb{R}}^2$ is called an {\it MBC arrangement}. 
\begin{enumerate}
\item The  region $D_{\mathcal{S}} \subset {\mathbb{R}}^2$ is a bounded connected component of ${\mathbb{R}}^2-{\bigcup}_{S _{x_j,r_j} \in \mathcal{S}} S _{x_j,r_j}$.
\item The intersection $\overline{D_{\mathcal{S}}} \bigcap S _{x_j,r_j}$ is not empty for each circle $S _{x_j,r_j} \in \mathcal{S}$.
\item At points in $\overline{D_{\mathcal{S}}}$, at most two distinct circles $S_{x_{i_1},r_{i_1}}, S_{x_{i_2},r_{i_2}} \in \mathcal{S}$ intersect and the intersection satisfies the following: for each point $p_{i_1,i_2}$ there, the sum of the tangent vector spaces of $S_{x_{i_1},r_{i_1}}$  and $S_{x_{i_2},r_{i_2}}$ at $p_{i_1,i_2}$ coincides with the tangent vector space of ${\mathbb{R}}^2$ at $p_{i_1,i_2}$. Furthermore, these points are not vertical poles or horizontal poles of the circles from $\mathcal{S}$.

\end{enumerate}
\end{Def}
The following show new classes of arrangements of circles.
\begin{Def}
\label{def:3}
A pair of a set $\mathcal{S}:=\{S_{x_{j},r_{j}}\}$ of circles of fixed radii in ${\mathbb{R}}^2$ and a region $D_{\mathcal{S}} \subset {\mathbb{R}}^2$ which is also one of a bounded connected component of the complementary set ${\mathbb{R}}^2-{\bigcup}_{S_{x_{j},r_{j}} \in \mathcal{S}} S_{x_{j},r_{j}}$ is called a {\it normally inductive arrangement} ({\it NI arrangement}) {\it of circles} if we can have $\mathcal{S}$ inductively in the following way.
\begin{enumerate}
\item \label{def:3.1} Choose a non-empty set ${\mathcal{S}}_0$ of mutually disjoint circles $S_{x_{j,0},r_{j,0}} \in {\mathcal{S}}_0$ of fixed radii in ${\mathbb{R}}^2$ such that the boundary $\partial \overline{D_{{\mathcal{S}}_0}}$ of the closure $\overline{D_{{\mathcal{S}}_0}}$ of a bounded connected component $D_{{\mathcal{S}}_0} \subset {\mathbb{R}}^2$ of the complementary set ${\mathbb{R}}^2-{\bigcup}_{S_{x_{j,0},r_{j,0}} \in 
{\mathcal{S}}_0} S_{x_{j,0},r_{j,0}}$ is the disjoint union of all circles $S_{x_{j,0},r_{j,0}}$.
\item \label{def:3.2} Let ${\mathcal{S}}:={\mathcal{S}}_0$. We consider the following procedure inductively.
First we choose a new circle $S_{x_{j^{\prime}},r_{j^{\prime}}}$ of a fixed radius $r_{j^{\prime}}>0$ centered at $x_{j^{\prime}} \in {\mathbb{R}}^2$ and intersecting at least one circle from the existing set ${\mathcal{S}}$. We define the new set ${\mathcal{S}}^{\prime}$ by adding the new circle there. Two distinct circles $S_{x_{j},r_{j}}$ and $S_{x_{j^{\prime}},r_{j^{\prime}}}$ always intersect according to the rule as in Definition \ref{def:2} at points in $\overline{D_{\mathcal{S}}}$: for each point $p_{j,j^{\prime}}$ of such points, the sum of the tangent vector spaces of $S_{x_{j},r_{j}}$  and $S_{x_{j^{\prime}},r_{j^{\prime}}}$ at $p_{j,j^{\prime}}$ coincides with the tangent vector space of ${\mathbb{R}}^2$ at $p_{j,j^{\prime}}$ and three distinct circles $S_{x_{j_1},r_{j_1}}$, $S_{x_{j_2},r_{j_2}}$, and $S_{x_{j^{\prime}},r_{j^{\prime}}}$ do not intersect at any point in $\overline{D_{\mathcal{S}}}$. We also define the new region $D_{{\mathcal{S}}^{\prime}}$ as the intersection $D_{\mathcal{S}} \bigcap {E_{x_{j^{\prime}},r_{j^{\prime}}}}^{\circ}$ where $E_{x_{j^{\prime}},r_{j^{\prime}}}:={D_{x_{j^{\prime}},r_{j^{\prime}}}}$ for the closed disk $D_{x_{j^{\prime}},r_{j^{\prime}}}$ whose boundary is $S_{x_{j^{\prime}},r_{j^{\prime}}}$ in ${\mathbb{R}}^2$ or $E_{x_{j^{\prime}},r_{j^{\prime}}}:={\mathbb{R}}^2-D_{x_{j^{\prime}},r_{j^{\prime}}}$. We pose a constraint that the intersections $\overline{D_{{\mathcal{S}}}^{\prime}} \bigcap S _{x_j,r_j}$ and $\overline{D_{{\mathcal{S}}}^{\prime}} \bigcap S _{x_{j^{\prime}},r_{j^{\prime}}}$ are non-empty for all considered circles $S _{x_j,r_j}, S_{x_{j^{\prime}},r_{j^{\prime}}} \in {\mathcal{S}}^{\prime}$. We redefine $(\mathcal{S},D_{\mathcal{S}}):=({\mathcal{S}}^{\prime},D_{{\mathcal{S}}^{\prime}})$.
\end{enumerate}
\end{Def}
\begin{Def}
\label{def:4} 
In Definition \ref{def:3}, we define classes of NI arrangements of circles according to additional rules in the step {\rm (}\ref{def:3.2}{\rm )}.
\begin{itemize}
\item If each point $p_{j,j^{\prime}}$ of the intersections of the circles is not a vertical pole or a horizontal pole of any of the circles, then by our definition the resulting pair of the family of circles and the region is an MBC arrangement and defined to be a {\it normally inductive} MBC arrangement (an {\it NI-MBC arrangement}).  
\item If the set ${\bigcup}_{S _{x_j,r_j} \in \mathcal{S}} S_{x_j,r_j} \bigcap D_{x_{j^{\prime}},r_{j^{\prime}}}$ is connected and non-empty for the circle $S _{x_{j^{\prime}},r_{j^{\prime}}}$ at each step, then we call the resulting pair of the family of circles and the region a {\it normally connected inductive} arrangement of circles (an {\it NCI arrangement of circles}).
\item We call an NCI arrangement of circles which is also an NI-MBC arrangement is an {\it NCI-MBC arrangement}.
\item {\rm (}\cite{kitazawa3}{\rm )} If each circle $S _{x_{j^{\prime}},r_{j^{\prime}}}$ is centered at a point of a circle of $\mathcal{S}$ and sufficiently small and $E_{x_{j^{\prime}},r_{j^{\prime}}}:={\mathbb{R}}^2-D_{x_{j^{\prime}},r_{j^{\prime}}}$, then it is an MBC arrangement {\rm (}{\it MBCC arrangement}{\rm )}.
\end{itemize}  
\end{Def}

\subsection{Organization of our paper and our main work.}
In the next section, we define {\it Poincar\'e-Reeb graphs} of the regions ${\mathcal{D}}_{\mathcal{S}} \subset {\mathbb{R}}^2$, which are the spaces of all connected components of all preimages of single points of the restrictions of the projections ${\pi}_{2,1,i}$ to the closures of the regions, endowed with the quotient topologies. The vertex set of such a graph is the set of all connected components containing horizontal poles, vertical poles, or points contained in exactly two circles of $\mathcal{S}$. They respect \cite{bodinpopescupampusorea, kohnpieneranestadrydellshapirosinnsoreatelen, sorea1, sorea2} for example, and are formulated rigorously first in \cite{kitazawa3}.  The regions collapse to the graphs naturally. We explain that the closures of our regions are seen as the images of natural real algebraic maps locally like so-called moment-maps and generalizing the canonical projection ${\pi}_{m+1,2} {\mid}_{S^m}$ of the unit sphere $S^m$ ($m \geq 2$), the case $\mathcal{S}=\{S^1\}$ with $D_{\mathcal{S}}={(D^2)}^{\circ}$. By composing the projections, we have nice smooth functions. We do not need to understand such maps or related singularity theory with geometry and we leave it to \cite{kitazawa1, kitazawa2}.

In the third section, we define a new class of NI arrangements of circles: NI arrangements {\it supported by small chords} ({\it SSC-NI arrangements}). In short, in Definition \ref{def:3}, at each step in (\ref{def:3.2}), we choose a new circle passing the two distinct sufficiently close points contained in some circles of $\mathcal{S}$ and having an arc sufficiently close to the chord connecting the previous two points, as a subspace. We investigate local changes of the regions (Poincar\'e-Reeb graphs). We have investigated the changes in the case of MBCC arrangements in \cite{kitazawa3}. We also compare the results.

\section{Poincar\'e-Reeb graphs of the region $D_{\mathcal{S}}$ and real algebraic maps onto (the closure) of the region.}
\label{sec:2}

\subsection{Dimensions of topological spaces decomposed into cells, smooth maps and graphs.}
\label{subsec:2.1}
For a non-empty topological space $X$ decomposed into cells, we can define the dimension $\dim X$ as the unique non-negative integer. This gives a topological invariant.
(Topological) manifolds, graphs, polyhedra, and CW complexes are of such a class.

For a smooth manifold $X$, let $T_p X$ denote the tangent vector space at $p \in X$.
For a smooth map $c:X \rightarrow Y$ between smooth manifolds $X$ and $Y$, the differential ${dc}_p:T_pX \rightarrow T_{c(p)} Y$ at $p \in X$ is a linear map between the tangent vector spaces. The point $p$ is a {\it singular} point of $c$ if the rank of the differential ${dc}_p$ drops. The space $C^{\infty}(X,Y)$ of all smooth maps between the manifolds can be topologized with the {\it Whitney $C^{\infty}$ topology}. We do not need to understand related definitions and theory rigorously. We only note that this topology is based on whether two smooth maps are close not only in the values of the maps at each point but also in the values of the derivations.   

A {\it diffeomorphism} is a smooth map which is a homeomorphism and which has no singular point. A diffeomorphism on a smooth manifold $X$ is a diffeomorphism from $X$ onto itself. The {\it diffeomorphism group} of $X$ is the group consisting of all diffeomorphisms on $X$.

We define a class of bundles: a {\it smooth} bundle. A {\it smooth} bundle is a bundle whose fiber is a smooth manifold and whose structure group is regarded as a subgroup of the diffeomorphism group of the fiber.

A graph is a CW complex consisting of $0$-cells ({\it vertices}) and $1$-cells ({\it edges}). The set of all vertices (edges) is the vertex (resp. {\it edge}) set of the graph.
An {\it oriented} graph or a {\it digraph} is a graph whose edges are all oriented. A {\it V-digraph} $K$ is a digraph associated with a map $l_K:V_K \rightarrow P$ from the vertex set $V_K$ into an totally ordered set $P$ and oriented according to the values $l(v)$: the relation $l_K(v_1) \neq l_K(v_2)$ must hold if there exists an edge $e$ connecting $v_1$ and $v_2$ and $e$ is oriented as an edge departing from $v_1$ and entering $v_2$ if $l_K(v_1)<l_K(v_2)$ where $<$ denotes the order on $P$.
\begin{Def}
Two graphs $K_1$ and $K_2$ are {\it isomorphic} if there exists a piecewise smooth homeomorphism $\phi:K_1 \rightarrow K_2$ mapping the vertex set of $K_1$ onto that of $K_2$: this is an {\it isomorphism} between the graphs.
Two digraphs are {\it isomorphic} if in addition there exists an isomorphism of the graphs preserving the orientations of the edges: this is an {\it isomorphism} between the digraphs.
Two V-digraphs are {\it isomorphic} if in addition there exists an isomorphism of the digraphs preserving the orders of the values of $l_K$: this is an {\it isomorphism} between the V-digraphs.
\end{Def}

\subsection{A Poincar\'e-Reeb graph, our region surrounded by the circles collapses.}
\label{subsec:2.2}
We review a graph for $(\mathcal{S},D_{\mathcal{S}})$. This is the set of all connected components of all preimages of all single points for the restriction ${\pi}_{2,1,i} {\mid}_{\overline{D_{\mathcal{S}}}}$ of the projection. We can topologize the set with the quotient topology. A {\it Poincar\'e-Reeb graph} of $D_{\mathcal{S}}$ for ${\pi}_{2,1,i} {\mid}_{\overline{D_{\mathcal{S}}}}$ is the graph whose vertex set consists of all points (connected components of the preimages) containing vertical poles (resp. horizontal poles) of some circles of $\mathcal{S}$ or points in exactly two distinct circles of $\mathcal{S}$ for $i=1$ (resp. $i=2$). This is shown to be a graph.
We review essential parts in showing that this is a graph where more precise exposition is given in \cite{kitazawa3}. We have given the exposition in \cite{kitazawa3} due to the situation that such exposition had not been presented ever. On interiors of edges, the projection ${\pi}_{2,1,i} {\mid}_{D_{\mathcal{S}}}$ gives a trivial smooth bundle whose fiber is diffeomorphic to a closed interval $D^1$, by so-called Ehresmann fibration theorem (or its relative version). 
We can easily check local structures around vertices. Let $q_{D_{\mathcal{S}},i}:\overline{D_{\mathcal{S}}} \rightarrow G_{D_{\mathcal{S}},i}$ denote the quotient map onto the graph $G_{D_{\mathcal{S}},i}$.

We can regard this as a V-digraph by the (restriction to the vertex set of the graph of the) uniquely defined function $V_{D_{\mathcal{S}},i}:G_{D_{\mathcal{S}},i} \rightarrow \mathbb{R}$ with the relation ${\pi}_{2,1,i} {\mid}_{\overline{D_{\mathcal{S}}}}=V_{D_{\mathcal{S}},i} \circ q_{D_{\mathcal{S}},i}$. Based on this, we call this V-graph the {\it Poincar\'e-Reeb V-digraph of $D_{\mathcal{S}}$ for ${\pi}_{2,1,i}$}.

As more general theory, \cite{saeki1, saeki2} explain this fact.

\subsection{A real algebraic map onto the closure $\overline{D_{\mathcal{S}}} \subset {\mathbb{R}}^2$.}
\label{subsec:2.3}
\cite{kitazawa2} presents a natural real algebraic map onto the closure $\overline{D_{\mathcal{S}}} \subset {\mathbb{R}}^2$ locally represented as moment maps. We explain this where important ingredients are left to \cite{kitazawa2}. We do not assume arguments there in our paper.

We can define a surjective map $m_{\mathcal{S},D_{\mathcal{S}}}:\mathcal{S} \rightarrow A$ onto some finite set $A$. We also define this according to the following rule: if two distinct circles $S_{x_{j_1},r_{j_1}}, S_{x_{j_2},r_{j_2}} \in \mathcal{S}$ intersect at some point in the closure $\overline{D_{\mathcal{S}}}$, then the values there are distinct. Let $m_{\mathcal{S},D_{\mathcal{S}},0}$ be a non-negative integer valued map on $A$.

Hereafter, we also see ${\mathbb{R}}^n$ as the vector space canonically. We also use the notation for vectors. We define $p-q \in {\mathbb{R}}^n$ canonically for example and $||p-q||$ is also defined of course.
Let $f_j(x)=||x-x_j||^2-{r_j}^2$.

Let $M_{\mathcal{S},D_{\mathcal{S},m_{\mathcal{S},D_{\mathcal{S}}},m_{\mathcal{S},D_{\mathcal{S}},0}}}:=\{(x,y) \in {\mathbb{R}}^2 \times {\prod}_{a \in A} {\mathbb{R}}^{m_{\mathcal{S},D_{\mathcal{S}},0}(a)+1} \mid {\prod}_{j \in {m_{\mathcal{S},D_{\mathcal{S}}}}^{-1}(a)} |f_j(x)|-||y_a||^2\}$: the vector $y_a$ shows the component of the vector $y$ (the ordered sequence $y$ of vectors) which corresponds to $a \in A$. This set $M_{\mathcal{S},D_{\mathcal{S},m_{\mathcal{S},D_{\mathcal{S}}},m_{\mathcal{S},D_{\mathcal{S}},0}}}$ is the zero set of the real polynomial map. This is also {\it non-singular}: a {\it non-singular} union of connected components of the zero set of a real polynomial map is defined by the implicit function theorem or the rank of the polynomial map canonically.

For real algebraic geometry, see \cite{bochnakcosteroy, kollar} for example. We do not need related knowledge of course.

We define the restriction $f_{\mathcal{S},D_{\mathcal{S},m_{\mathcal{S},D_{\mathcal{S}}},m_{\mathcal{S},D_{\mathcal{S}},0}}}:={\pi_{{\Sigma}_{a \in A} (m_{\mathcal{S},D_{\mathcal{S}},0}(a)+1)+2,2}} {\mid}_{M_{\mathcal{S},D_{\mathcal{S},m_{\mathcal{S},D_{\mathcal{S}}},m_{\mathcal{S},D_{\mathcal{S}},0}}}}:M_{\mathcal{S},D_{\mathcal{S},m_{\mathcal{S},D_{\mathcal{S}}},m_{\mathcal{S},D_{\mathcal{S}},0}}} \rightarrow {\mathbb{R}}^2$.

The Poincar\'e-Reeb V-digraph of $D_{\mathcal{S}}$ for ${\pi}_{2,1,i}$ is also seen as the {\it Reeb graph} of the function ${\pi}_{2,1,i} \circ f_{\mathcal{S},D_{\mathcal{S},m_{\mathcal{S},D_{\mathcal{S}}},m_{\mathcal{S},D_{\mathcal{S}},0}}}:M_{\mathcal{S},D_{\mathcal{S},m_{\mathcal{S},D_{\mathcal{S}}},m_{\mathcal{S},D_{\mathcal{S}},0}}} \rightarrow \mathbb{R}$. For an MBC arrangement, we have a so-called {\it Morse-Bott} function. 

The {\it Reeb graph} $W_c$ of a smooth function $c$ is a graph whose underlying set is the set of all connected components of all preimages of all single points for the smooth function $c$ and topologized in the way similar to the case of Poincar\'e-Reeb graphs. \cite{reeb} is a related pioneering study. 
By defining the vertex set as the set of all points (connected components of the preimages) containing some singular points of the function, this is a graph in considerable cases including our cases for example. \cite{saeki1, saeki2} explain this fact.
Reeb graphs have been important topological and combinatorial objects and strong tools in theory of Morse functions, various singularity theory of differentiable maps and applied mathematics such as visualizations. 
We can argue the structures of the graphs as in Poincar\'e-Reeb graphs (V-digraphs).
We can define the V-digraph $W_c$ as the {\it Reeb V-digraph} of $c$.

As a fundamental exercise on singularity theory, we can check that the Poincar\'e-Reeb V-digraph $G_{D_{\mathcal{S}},i}$ of $D_{\mathcal{S}}$ for ${\pi}_{2,1,i}$ and the V-digraph $W_c$ are isomorphic.
We do not need to understand singularity theory of differentiable (smooth) maps. For related study, see \cite{golubitskyguillemin} for example.

\section{Our new class of arrangements of circles: SSC-NI arrangements.}
\label{sec:3}
This section is a main ingredient of our paper and presents new observations, arguments and results.

Hereafter, for example, we assume elementary plane geometry. Especially, we assume fundamental knowledge and facts on segments, straight lines and circles in the Euclidean plane ${\mathbb{R}}^2$.

The {\it volume} of a subset of the Euclidean space is defined naturally from the standard Euclidean metric (if it is a Lebesgue measurable set for example). 

In the $1$-dimensional case, the volume is the length and in the $2$-dimensional case, the volume is the area, for example.
We only need very elementary knowledge on volumes and measures.

For an NI arrangement $\mathcal{S}$ of circles, a {\it chord of $\mathcal{S}$} means a segment $\{p_1+t(p_2-p_1) \mid 0 \leq t \leq 1\}$ connecting two distinct points $p_1, p_2 \in {\mathbb{R}}^2$ which intersect the union ${\bigcup}_{S_{x_j,r_j} \in \mathcal{S}} S_{x_j,r_j}$ in the two-point set $\{p_1,p_2\}$: we also pose a condition that each $p_i$ ($i=1,2$) is in a single circle $S_{x_{i_j},r_{i_j}} \in \mathcal{S}$, and that for such a point $p_i$ ($i=1,2$), the sum of the tangent vector space of the chord of $\mathcal{S}$ at $p_i$ and the tangent vector space of $S_{x_{i_j},r_{i_j}}$ at $p_i$ is the tangent vector space of ${\mathbb{R}}^2$ there, and call these two points {\it chord boundary points} of the chord of $\mathcal{S}$. A {\it straight secant of $\mathcal{S}$} means a straight line $\{p_1+t(p_2-p_1) \mid t \in \mathbb{R}\}$ containing the chord $\{p_1+t(p_2-p_1) \mid 0 \leq t \leq 1\}$
of $\mathcal{S}$ as a subset. A circle containing the two chord boundary points of a chord of $\mathcal{S}$ is called a {\it circular secant of $\mathcal{S}$}: we also pose a condition that for such a point $p_i$ ($i=1,2$), the sum of the tangent vector space of the circular secant of $\mathcal{S}$ at $p_i$ and the tangent vector space of $S_{x_{i_j},r_{i_j}}$ at $p_i$ is the tangent vector space of ${\mathbb{R}}^2$ there.
We also call both a straight secant and a circular secant of $\mathcal{S}$ a {\it secant of $\mathcal{S}$}.

A connected open set $D \subset {\mathbb{R}}^2$ satisfying the following is {\it a region surrounded by circular segments} or {\it CS-region} of $\mathcal{S}$.
\begin{itemize}
\item The boundary $\overline{D}-D$ is represented as ${\bigcup}_{S_j \in J} C_{\mathcal{S},S_j} \bigcup {S_{\mathcal{S}}}^{\prime}$ and a piecewise smooth curve where the notation for the sets is as follows: each $C_{\mathcal{S},S_j}$ is regarded as a subset of the uniquely defined circle $S_j \in \mathcal{S}$ and a 1-dimensional submanifold (curve) and indexed by an element of some finite subset $J$ of circles from $\mathcal{S}$ and ${S_{\mathcal{S}}}^{\prime}$ is a subset of a secant $S_{\mathcal{S}}$ of $\mathcal{S}$ and a $1$-dimensional submanifold of $S_{\mathcal{S}}$. We call $S_{\mathcal{S}}$ a {\it supporting secant of $(D,\mathcal{S})$}. We call each set $C_{\mathcal{S},S_j}$ a {\it supporting subset of $(D,S_j,\mathcal{S})$} and $J$ a {\it labeling set for $(D,\mathcal{S})$}.
\item The region $D$ contains no points of circles of $\mathcal{S}$.
\end{itemize}
For example, for circles in the plane we have abused the concept that they are sufficiently small with no explicit exposition: we can guess this naturally and we can formulate by using arguments via sufficiently small real number $\epsilon>0$.
A CS-region (of $\mathcal{S}$) of a certain class is {\it sufficiently small} if the area of $\overline{D}$ and the length of $\overline{D}-D$, which are defined of course, are sufficiently small (we can formulate by using arguments via sufficiently small real numbers $\epsilon>0$ for areas of the regions and lengths of the curves). This is seen as a way of natural formulation.


For an NI arrangement $(\mathcal{S},D_{\mathcal{S}})$ of circles, choose a point $x_{j^{\prime},0} \in \overline{D_{\mathcal{S}}}$ contained in a single circle $S_j$ and choose a sufficiently small circle ${S^{\prime}}_{x_{j^{\prime},0}}$ centered there and bounds the closed disk ${D^{\prime}}_{x_{j^{\prime},0}}$. 
We have the unique chord of $\mathcal{S}$ connecting the two points $x_{j^{\prime},0,1}$ and $x_{j^{\prime},0,2}$ of ${S^{\prime}}_{x_{j^{\prime},0}} \bigcap S_j \bigcap \overline{D_{\mathcal{S}}}={S^{\prime}}_{x_{j^{\prime},0}} \bigcap S_j:=\{x_{j^{\prime},0,1}, x_{j^{\prime},0,2}\}$.
We have a chord of $\mathcal{S}$ which is also a subset of $ {D^{\prime}}_{x_{j^{\prime},0}}$ and the two chord boundary points of which are denoted by $x_{j^{\prime},0,3}$ and $x_{j^{\prime},0,4}$, respectively. We can also have one such that the five points $x_{j^{\prime},0,1}$, $x_{j^{\prime},0,3}$, $x_{j^{\prime},0}$, $x_{j^{\prime},0,4}$ and $x_{j^{\prime},0,2}$ are mutually distinct and located on the connected curve ${D^{\prime}}_{x_{j^{\prime},0}} \bigcap S_j$ in this order. We can check the following easily.
\begin{Prop}
\label{prop:1}
For an arbitrary chord of $\mathcal{S}$ obtained above, either it is always a subset of $\overline{D_{\mathcal{S}}} \bigcap {D^{\prime}}_{x_{j^{\prime},0}}$ or a subset of $({\mathbb{R}}^2-D_{\mathcal{S}}) \bigcap {D^{\prime}}_{x_{j^{\prime},0}}$.
\end{Prop}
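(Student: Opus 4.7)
The plan is to reduce Proposition \ref{prop:1} to a separation argument: a chord is a straight segment with endpoints on the single circle $S_j$, so by convexity of $D_{x_j,r_j}$ it is confined to the inside of $S_j$; whether that side lies in $D_{\mathcal{S}}$ or in its complement is then determined once and for all by the arrangement near $x_{j^{\prime},0}$, independently of the chord chosen.

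First I would make explicit the consequence of sufficient smallness. Because $x_{j^{\prime},0}$ lies on the single circle $S_j\in\mathcal{S}$, every other circle of $\mathcal{S}$ has positive distance from $x_{j^{\prime},0}$, so for ${D^{\prime}}_{x_{j^{\prime},0}}$ small enough the intersection ${D^{\prime}}_{x_{j^{\prime},0}}\cap\bigcup_{S_k\in\mathcal{S}}S_k$ is a single simple arc of $S_j$ through $x_{j^{\prime},0}$. This arc partitions the open small disk into two connected open pieces $U_{\mathrm{in}}:={{D^{\prime}}_{x_{j^{\prime},0}}}^{\circ}\cap D_{x_j,r_j}^{\circ}$ and $U_{\mathrm{out}}:={{D^{\prime}}_{x_{j^{\prime},0}}}^{\circ}\setminus D_{x_j,r_j}$, neither of which meets any circle of $\mathcal{S}$, so each lies in a single connected component of ${\mathbb{R}}^2-\bigcup_{S_k\in\mathcal{S}}S_k$. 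Combining this with the Jordan curve theorem applied to $S_j$, each such component lies entirely on one side of $S_j$; since $x_{j^{\prime},0}\in\overline{D_{\mathcal{S}}}$, exactly one of $U_{\mathrm{in}},U_{\mathrm{out}}$ is a subset of $D_{\mathcal{S}}$ and the other is a subset of ${\mathbb{R}}^2-\overline{D_{\mathcal{S}}}$. Which alternative holds depends only on the position of $x_{j^{\prime},0}$ within the arrangement, not on any chord to come.

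Then I would close the argument by convexity. Any admissible chord is a straight segment whose two chord boundary points lie on the arc $S_j\cap{D^{\prime}}_{x_{j^{\prime},0}}$, so by convexity of $D_{x_j,r_j}$ and of ${D^{\prime}}_{x_{j^{\prime},0}}$ the entire segment lies in their intersection; the transversality built into the chord boundary condition forces the open part of the segment into the interior, hence into $U_{\mathrm{in}}$. Thus every such chord is contained in $\overline{U_{\mathrm{in}}}\cap{D^{\prime}}_{x_{j^{\prime},0}}$, and feeding this into the dichotomy above yields exactly the two alternatives in the statement, with the same alternative for every chord since $U_{\mathrm{in}}$ is fixed. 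I do not expect any substantive obstacle; the only care needed is in confirming that ${D^{\prime}}_{x_{j^{\prime},0}}\cap S_j$ is a simple arc cleanly partitioning the open small disk into the two lunes $U_{\mathrm{in}},U_{\mathrm{out}}$, and this is a routine Jordan-arc consequence of the sufficient smallness of ${D^{\prime}}_{x_{j^{\prime},0}}$ relative to $r_j$. The heart of the argument is simply the convexity of the disk bounded by $S_j$.
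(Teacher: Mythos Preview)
Your argument is correct. The paper does not actually supply a proof of Proposition~\ref{prop:1}; it simply asserts that ``we can check the following easily'' and moves on. Your write-up therefore fills in exactly the elementary plane-geometric verification the paper leaves implicit: smallness forces ${D^{\prime}}_{x_{j^{\prime},0}}$ to meet only the single circle $S_j$, the resulting arc of $S_j$ splits the small disk into two lunes each lying in a fixed component of ${\mathbb{R}}^2-\bigcup_k S_k$, and convexity of $D_{x_j,r_j}$ pins every admissible chord into the inner lune $U_{\mathrm{in}}$. This is the natural route and almost certainly what the author had in mind. One cosmetic remark: the appeal to ``transversality'' to get the open segment into $U_{\mathrm{in}}$ is not really needed, since a straight line meets a circle in at most two points, so any chord of $S_j$ already has its interior in $D_{x_j,r_j}^{\circ}$.
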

\begin{Def}
For Proposition \ref{prop:1}, in the former {\rm (}latter{\rm )} case, we call a point $x_{j^{\prime},0}$ a {\it convex} {\rm (}resp. {\it concave}{\rm )} $D_{\mathcal{S}}$-point. We call such a chord of $\mathcal{S}$ a {\it chord at $(D_{\mathcal{S}},x_{j^{\prime},0})$}.
\end{Def}
We can check the following easily.
\begin{Prop}
\label{prop:2}
\begin{enumerate}
\item \label{prop:2.1} Let a point $x_{j^{\prime},0}$ be not a horizontal pole or a vertical pole of $S_j$. For the chord $C_{D_{\mathcal{S}},x_{j^{\prime},0}}$ at $(D_{\mathcal{S}},x_{j^{\prime},0})$ above, we consider the representation of the form $\{p_1+t(p_2-p_1) \mid 0 \leq t \leq 1\}$. For $(p_{1,2,1},p_{1,2,2}):=p_2-p_1$, $p_{1,2,1} \neq 0$ and $p_{1,2,2} \neq 0$ must be satisfied and the signs of $p_{1,2,1}$ and $p_{1,2,2}$ are either of the following for any of these chords at $(D_{\mathcal{S}},x_{j^{\prime},0})$ and these presentations{\rm :} the signs are always same or they are always different and this only depends on the pair $(D_{\mathcal{S}},x_{j^{\prime},0})$.
\item \label{prop:2.2} Let a point $x_{j^{\prime},0}$ be a horizontal pole of $S_j$. For the chord $C_{D_{\mathcal{S}},x_{j^{\prime},0}}$ at $(D_{\mathcal{S}},x_{j^{\prime},0})$ above, we consider the representation of the form $\{p_1+t(p_2-p_1) \mid 0 \leq t \leq 1\}$. For $(p_{1,2,1},p_{1,2,2}):=p_2-p_1$, $p_{1,2,1} \neq 0$ must be satisfied. Furthermore, we can have all cases of $p_{1,2,1}$ and $p_{1,2,2}$ of the following{\rm :} the case $p_{1,2,2} \neq 0$ with the signs being same, the case $p_{1,2,2} \neq 0$ with the signs being different, and the case $p_{1,2,2}=0$.
\item \label{prop:2.3} Let a point $x_{j^{\prime},0}$ be a vertical pole of $S_j$. For the chord $C_{D_{\mathcal{S}},x_{j^{\prime},0}}$ at $(D_{\mathcal{S}},x_{j^{\prime},0})$ above, we consider the representation of the form $\{p_1+t(p_2-p_1) \mid 0 \leq t \leq 1\}$. For $(p_{1,2,1},p_{1,2,2}):=p_2-p_1$, $p_{1,2,2} \neq 0$ must be satisfied. Furthermore, we can have all cases of $p_{1,2,1}$ and $p_{1,2,2}$ of the following{\rm :} the case $p_{1,2,1} \neq 0$ with their signs being same, the case $p_{1,2,1} \neq 0$ with their signs being different, and the case $p_{1,2,1}=0$.
\end{enumerate}
\end{Prop}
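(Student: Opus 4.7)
The plan is to parameterize $S_j$ by angle around its center and compute $p_2-p_1$ by Taylor expansion. Write $S_j=\{(a+r\cos\alpha,b+r\sin\alpha)\mid\alpha\in[0,2\pi)\}$, let $\alpha_0$ correspond to $x_{j^{\prime},0}$, and represent the two chord endpoints $x_{j^{\prime},0,3}$, $x_{j^{\prime},0,4}$ as the points with angular parameters $\alpha_0+\theta_3$, $\alpha_0+\theta_4$; the prescribed cyclic order on $\overline{D_{\mathcal{S}}}\cap {D^{\prime}}_{x_{j^{\prime},0}}\cap S_j$ forces $\theta_3<0<\theta_4$, and both are small when ${S^{\prime}}_{x_{j^{\prime},0}}$ is small. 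Then
\[
p_2-p_1=\bigl(r\{\cos(\alpha_0+\theta_4)-\cos(\alpha_0+\theta_3)\},\ r\{\sin(\alpha_0+\theta_4)-\sin(\alpha_0+\theta_3)\}\bigr).
\]

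For part (\ref{prop:2.1}), I would use the first-order expansion
\[
p_2-p_1=(\theta_4-\theta_3)\cdot r(-\sin\alpha_0,\ \cos\alpha_0)+O(\theta^{2}),
\]
so the chord direction is a small perturbation of the tangent vector to $S_j$ at $x_{j^{\prime},0}$. As $x_{j^{\prime},0}$ is neither a horizontal nor a vertical pole, both $\sin\alpha_0$ and $\cos\alpha_0$ are non-zero, so for sufficiently small ${S^{\prime}}_{x_{j^{\prime},0}}$ both components $p_{1,2,1},p_{1,2,2}$ are non-zero and inherit the signs of the first-order terms. Those signs depend only on $\alpha_0$, i.e., only on $x_{j^{\prime},0}$ as a point of $S_j$; swapping $p_1$ and $p_2$ reverses both signs simultaneously, so the dichotomy ``same/different'' is an invariant of the pair $(D_{\mathcal{S}},x_{j^{\prime},0})$.

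For part (\ref{prop:2.2}), take $\alpha_0=\pi/2$ (the other horizontal pole is identical after a reflection). Then $\cos\alpha_0=0$ and the vertical component vanishes to first order; a second-order expansion gives
\[
p_2-p_1=\bigl(-r(\theta_4-\theta_3)+O(\theta^{3}),\ -\tfrac{r}{2}(\theta_4-\theta_3)(\theta_4+\theta_3)+O(\theta^{4})\bigr).
\]
Because $\theta_4-\theta_3>0$, the horizontal component is non-zero, proving $p_{1,2,1}\neq 0$. The leading term of the vertical component is proportional to $\theta_3+\theta_4$, which is positive, negative, or zero depending on which of $\theta_4$ or $|\theta_3|$ is larger; by selecting the two endpoints on the small arc so that $\theta_3+\theta_4$ is positive, negative, or zero, I realize each of the three sign configurations. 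Part (\ref{prop:2.3}) is symmetric after interchanging the roles of the two coordinate axes.

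The main technical point to be checked is that the endpoints produced above really determine a chord of $\mathcal{S}$ in the sense of the definition: transversality with $S_j$ at both endpoints and inclusion in ${D^{\prime}}_{x_{j^{\prime},0}}$ on the correct side of $S_j$ to match the convex/concave position of $x_{j^{\prime},0}$ relative to $D_{\mathcal{S}}$. For ${S^{\prime}}_{x_{j^{\prime},0}}$ small enough this is automatic, because the arc is $C^1$-close to the tangent line at $x_{j^{\prime},0}$, a short chord of a strictly convex arc meets it transversally at both endpoints, and the chord stays in a prescribed half-neighborhood of $x_{j^{\prime},0}$; once this verification is in hand, the Taylor expansion computation above is the whole content of the argument.
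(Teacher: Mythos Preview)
Your argument is correct. The paper itself gives no proof of this proposition: it merely prefaces the statement with ``We can check the following easily'' and, after the statement, points to a single figure illustrating case~(\ref{prop:2.3}). Your Taylor expansion in the angular parameter, reducing the chord direction to the tangent $r(-\sin\alpha_0,\cos\alpha_0)$ at first order and to the second-order correction governed by $\theta_3+\theta_4$ at a pole, is exactly the elementary computation the paper is implicitly invoking; you have simply written it out. The one place where you go beyond the paper is the closing paragraph verifying that the segment so produced is genuinely a chord of $\mathcal{S}$ (transversality at the endpoints and containment in ${D^{\prime}}_{x_{j^{\prime},0}}$); the paper does not comment on this, but your observation that a short chord of a strictly convex arc is automatically transverse and stays on the correct side is the right justification.
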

We show an example for Proposition \ref{prop:2} (\ref{prop:2.3}) as FIGURE \ref{fig:1}, for example.
\begin{figure}
	\includegraphics[width=80mm,height=80mm]{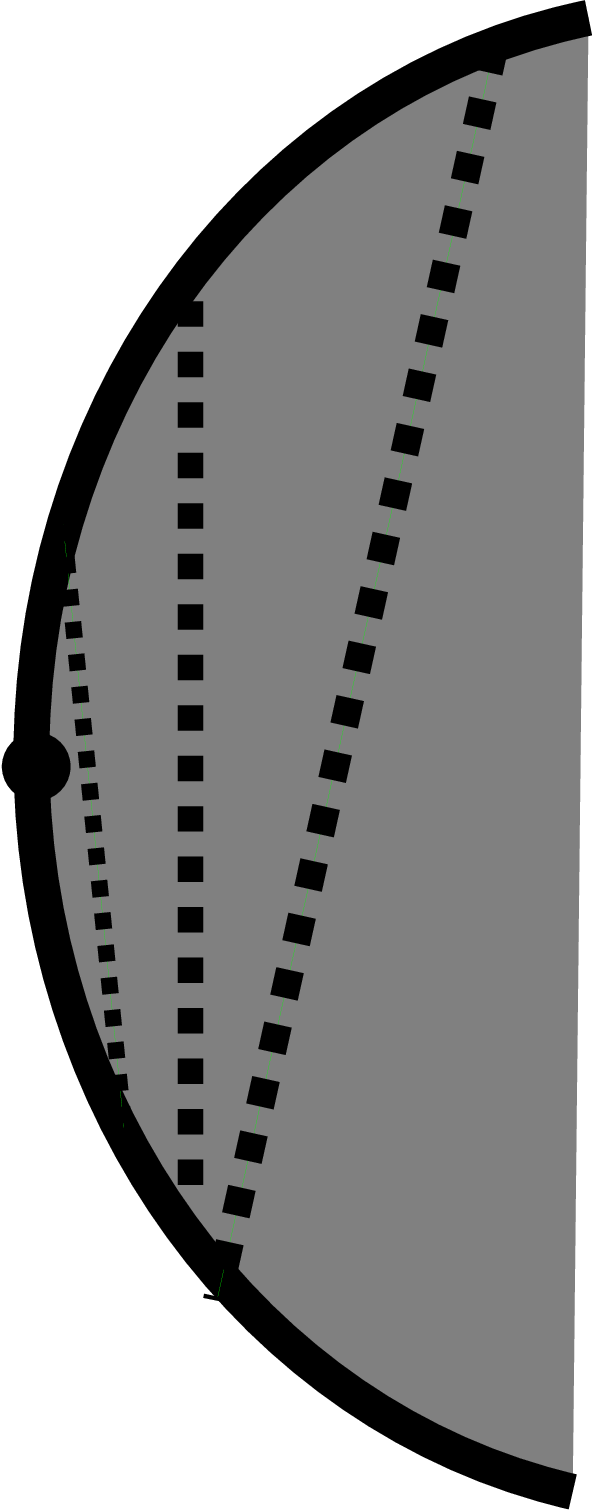}
	\caption{Chords of $\mathcal{S}$ of three types for Proposition \ref{prop:2} (\ref{prop:2.3}).}
	\label{fig:1}
\end{figure}

For any circular secant $S_{\mathcal{S},C_{D_{\mathcal{S}},x_{j^{\prime},0}}}$ of $\mathcal{S}$ passing the two chord boundary points of any chord $C_{D_{\mathcal{S}},x_{j^{\prime},0}}$ at $(D_{\mathcal{S}},x_{j^{\prime},0})$, we have an NCI-MBC arrangement which is also an MBCC arrangement as the pair of $(\{S_{\mathcal{S},C_{D_{\mathcal{S}},x_{j^{\prime},0}}}\},D_{\{S_{\mathcal{S},C_{D_{\mathcal{S}},x_{j^{\prime},0}}}\}})$
where $D_{\{S_{\mathcal{S},C_{D_{\mathcal{S}},x_{j^{\prime},0}}}\}}$ is the uniquely defined bounded region bounded by $S_{\mathcal{S},C_{D_{\mathcal{S}},x_{j^{\prime},0}}}$. 
The circle $S_{\mathcal{S},C_{D_{\mathcal{S}},x_{j^{\prime},0}}}$ is divided into two connected curves diffeomorphic to $D^1$ by the two chord boundary points of $C_{D_{\mathcal{S}},x_{j^{\prime},0}}$ at $(D_{\mathcal{S}},x_{j^{\prime},0})$. 
The chord $C_{D_{\mathcal{S}},x_{j^{\prime},0}}$ at $(D_{\mathcal{S}},x_{j^{\prime},0})$ is contained in the unique straight secant ${S}_{C_{D_{\mathcal{S}},x_{j^{\prime},0}}}$ of $\mathcal{S}$.
For the two resulting connected curves, we can choose the one which and $x_{j^{\prime},0}$ are in the same connected component of ${\mathbb{R}}^2-{S}_{C_{D_{\mathcal{S}},x_{j^{\prime},0}}}$. Let it be denoted by $S_{\mathcal{S},x_{j^{\prime},0}}$ and the remaining curve diffeomorphic to $D^1$ by $S_{\mathcal{S},\overline{x_{j^{\prime},0}}}$.

We can have the unique CS-region of $\{S_{\mathcal{S},C_{D_{\mathcal{S}},x_{j^{\prime},0}}}\}$ depending on $C_{D_{\mathcal{S}},x_{j^{\prime},0}}$
such that for the notation ${\bigcup}_{S_j \in J} C_{\mathcal{S},S_j} \bigcup {S_{\mathcal{S}}}^{\prime}$ {\rm (}${S_{\mathcal{S}}}^{\prime} \subset S_{\mathcal{S}}${\rm )} before $C_{\mathcal{S},S_j}:=S_{\mathcal{S},x_{j^{\prime},0}}$, $J:=\{S_{\mathcal{S},C_{D_{\mathcal{S}},x_{j^{\prime},0}}}\}$, ${S_{\mathcal{S}}}^{\prime}:=C_{D_{\mathcal{S}},x_{j^{\prime},0}}$, and $S_{\mathcal{S}}:={S}_{C_{D_{\mathcal{S}},x_{j^{\prime},0}}}$. Here we can also have the unique CS-region of $\{S_{\mathcal{S},C_{D_{\mathcal{S}},x_{j^{\prime},0}}}\}$ depending on 
$C_{D_{\mathcal{S}},x_{j^{\prime},0}}$ such that for the notation ${\bigcup}_{S_j \in J} C_{\mathcal{S},S_j} \bigcup {S_{\mathcal{S}}}^{\prime}$ before $C_{\mathcal{S},S_j}=S_{\mathcal{S},\overline{x_{j^{\prime},0}}}$, $J=\{S_{\mathcal{S},C_{D_{\mathcal{S}},x_{j^{\prime},0}}}\}$, ${S_{\mathcal{S}}}^{\prime}=C_{D_{\mathcal{S}},x_{j^{\prime},0}}$, and $S_{\mathcal{S}}={S}_{C_{D_{\mathcal{S}},x_{j^{\prime},0}}}$.
For this, Definition \ref{def:7} introduces classes of CS regions of $\{S_{\mathcal{S},C_{D_{\mathcal{S}},x_{j^{\prime},0}}}\}$.
\begin{Def}
\label{def:7}
We call the former resulting CS-region of $\{S_{\mathcal{S},C_{D_{\mathcal{S}},x_{j^{\prime},0}}}\}$ a {\it supported CS-region at $(D_{\mathcal{S}},x_{j^{\prime},0})$}. We call the latter resulting CS-region of $\{S_{\mathcal{S},C_{D_{\mathcal{S}},x_{j^{\prime},0}}}\}$ an {\it unsupported CS region at $(D_{\mathcal{S}},x_{j^{\prime},0})$}.
\end{Def}
\begin{Thm}
\item  \label{thm:1}
\begin{enumerate}
\item  \label{thm:1.1}  Let $x_{j^{\prime},0}$ be a convex $D_{\mathcal{S}}$-point.

If a supported CS-region at $(D_{\mathcal{S}},x_{j^{\prime},0})$ is sufficiently small, then 
we can choose $S_{x_{j^{\prime}},r_{j^{\prime}}}:=S_{\mathcal{S}}$ and $E_{x_{j^{\prime}},r_{j^{\prime}}}:=D_{x_{j^{\prime}},r_{j^{\prime}}}$ in Definition \ref{def:3} to have another NI arrangement ${\mathcal{S}}^{\prime}$ of circles. For an NCI arrangement $(\mathcal{S},D_{\mathcal{S}})$ of circles, $({\mathcal{S}}^{\prime},D_{{\mathcal{S}}^{\prime}})$ is also an NCI arrangement of circles, in this case. For an MBC arrangement $(\mathcal{S},D_{\mathcal{S}})$, $({\mathcal{S}}^{\prime},D_{{\mathcal{S}}^{\prime}})$ is also an MBC arrangement, in this case.

If an unsupported CS-region at $(D_{\mathcal{S}},x_{j^{\prime},0})$ is sufficiently small, then 
we can choose $S_{x_{j^{\prime}},r_{j^{\prime}}}:=S_{\mathcal{S}}$ and $E_{x_{j^{\prime}},r_{j^{\prime}}}:={\mathbb{R}}^2-D_{x_{j^{\prime}},r_{j^{\prime}}}$ in Definition \ref{def:3} to have another NI arrangement ${\mathcal{S}}^{\prime}$ of circles. For an NCI arrangement $(\mathcal{S},D_{\mathcal{S}})$ of circles, $({\mathcal{S}}^{\prime},D_{{\mathcal{S}}^{\prime}})$ is also an NCI arrangement of circles, in this case. For an MBC arrangement $(\mathcal{S},D_{\mathcal{S}})$, $({\mathcal{S}}^{\prime},D_{{\mathcal{S}}^{\prime}})$ is also an MBC arrangement, in this case.
\item  \label{thm:1.2} Let $x_{j^{\prime},0}$ be a concave $D_{\mathcal{S}}$-point.  We restrict the class of supported CS-regions at $(D_{\mathcal{S}},x_{j^{\prime},0})$ to that of CS-regions of $\{S_{\mathcal{S},C_{D_{\mathcal{S}},x_{j^{\prime},0}}}\}$ containing the following uniquely defined bounded CS-region of $\mathcal{S}${\rm :} for the notation ${\bigcup}_{S_j \in J} C_{\mathcal{S},S_j} \bigcup {S_{\mathcal{S}}}^{\prime}$  before $C_{\mathcal{S},S_j}$ is a curve containing $x_{j^{\prime},0}$ with $J=\{S_j\}$, ${S_{\mathcal{S}}}^{\prime}=C_{D_{\mathcal{S}},x_{j^{\prime},0}}$, and $S_{\mathcal{S}}={S}_{C_{D_{\mathcal{S}},x_{j^{\prime},0}}}$. This subclass is not empty.

If a supported CS-region at $(D_{\mathcal{S}},x_{j^{\prime},0})$ of this new class is sufficiently small, then 
we can choose $S_{x_{j^{\prime}},r_{j^{\prime}}}:=S_{\mathcal{S}}={S}_{C_{D_{\mathcal{S}},x_{j^{\prime},0}}}$ and $E_{x_{j^{\prime}},r_{j^{\prime}}}:={\mathbb{R}}^2-D_{x_{j^{\prime}},r_{j^{\prime}}}$ in Definition \ref{def:3} to have another NI arrangement ${\mathcal{S}}^{\prime}$ of circles. For an NCI arrangement $(\mathcal{S},D_{\mathcal{S}})$ of circles, $({\mathcal{S}}^{\prime},D_{{\mathcal{S}}^{\prime}})$ is also an NCI arrangement of circles, in this case. For an MBC arrangement $(\mathcal{S},D_{\mathcal{S}})$, $({\mathcal{S}}^{\prime},D_{{\mathcal{S}}^{\prime}})$ is also an MBC arrangement, in this case.

\end{enumerate}

\end{Thm}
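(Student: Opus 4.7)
The plan is to verify that the pair $({\mathcal{S}}^{\prime}, D_{{\mathcal{S}}^{\prime}})$ constructed from the stated choice of $S_{x_{j^{\prime}}, r_{j^{\prime}}}$ and $E_{x_{j^{\prime}}, r_{j^{\prime}}}$ satisfies the axioms of Definition \ref{def:3}, together with the additional NCI and MBC conditions of Definition \ref{def:4} when those apply. Since the three subcases (convex-supported, convex-unsupported, and concave-restricted-supported) differ only in the choice of $E_{x_{j^{\prime}}, r_{j^{\prime}}}$, I would organize the proof around them and treat the shared steps once.

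First I would exploit the sufficient-smallness hypothesis to reduce everything to a small neighborhood $U$ of $x_{j^{\prime},0}$. By the $\epsilon$-formulation of ``sufficiently small CS-region'', the arc of $S_{\mathcal{S}}$ on the relevant side of the chord $C_{D_{\mathcal{S}},x_{j^{\prime},0}}$ lies arbitrarily close to the chord, hence inside $U$. Choosing $U$ small enough, the only circle of $\mathcal{S}$ meeting $U$ is $S_j$, and no point of $U$ lies on two circles of $\mathcal{S}$ or on a horizontal or vertical pole of any circle of $\mathcal{S}$ other than $x_{j^{\prime},0}$ itself. Inside $U$ the new circle $S_{\mathcal{S}}$ meets $\mathcal{S}$ only at $x_{j^{\prime},0,3}$ and $x_{j^{\prime},0,4}$, and the sum of tangent spaces at each of those points equals $T{\mathbb{R}}^2$ by the definition of a circular secant of $\mathcal{S}$. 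Outside $U$ the new circle is disjoint from $\bigcup_{S \in \mathcal{S}} S$, so no triple intersection is introduced, and the non-emptiness conditions $\overline{D_{{\mathcal{S}}^{\prime}}} \cap S \neq \emptyset$ for $S \in {\mathcal{S}}^{\prime}$ follow because $D_{{\mathcal{S}}^{\prime}}$ agrees with $D_{\mathcal{S}}$ outside $U$ and meets $S_{\mathcal{S}}$ at the two chord boundary points.

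The main obstacle is checking that $D_{{\mathcal{S}}^{\prime}} = D_{\mathcal{S}} \cap E_{x_{j^{\prime}}, r_{j^{\prime}}}^{\circ}$ really is a bounded connected component of ${\mathbb{R}}^2 - \bigcup_{S \in {\mathcal{S}}^{\prime}} S$, and this is where the case distinction is essential. Inside $U$, the transverse arcs $S_j \cap U$ and $S_{\mathcal{S}} \cap U$ carve $U$ into four sectors meeting at $x_{j^{\prime},0,3}$ and $x_{j^{\prime},0,4}$, and I would identify $D_{{\mathcal{S}}^{\prime}} \cap U$ with the correct sector in each subcase. In the convex-supported case, the small arc of $S_{\mathcal{S}}$ lies on the $x_{j^{\prime},0}$ side of the chord (outside $D_{\mathcal{S}}$), so the disk $D_{x_{j^{\prime}}, r_{j^{\prime}}}$ lies mostly inside $D_{\mathcal{S}}$ and setting $E = D_{x_{j^{\prime}}, r_{j^{\prime}}}$ truncates $D_{\mathcal{S}}$ along the large arc of $S_{\mathcal{S}}$; the convex-unsupported case is the symmetric picture with $E = {\mathbb{R}}^2 - D_{x_{j^{\prime}}, r_{j^{\prime}}}$. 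In the concave-restricted case, the restriction that the supported CS-region contain the local CS-region of $\mathcal{S}$ bounded by the chord and the arc of $S_j$ through $x_{j^{\prime},0}$ is exactly what forces the new arc $S_{\mathcal{S}, x_{j^{\prime},0}}$ to enclose the local dip of $S_j$ into ${\mathbb{R}}^2 - D_{\mathcal{S}}$; excising $D_{x_{j^{\prime}}, r_{j^{\prime}}}$ from $D_{\mathcal{S}}$ then removes this dip cleanly, leaving a bounded connected component of the complement.

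Finally I would transfer the refinements. For NCI, by the local reduction it suffices that $\bigcup_{S \in \mathcal{S}} S \cap D_{x_{j^{\prime}}, r_{j^{\prime}}}$ be connected and non-empty; inside $U$ this reduces to $S_j \cap D_{x_{j^{\prime}}, r_{j^{\prime}}}$, a single arc containing both chord boundary points, so connectedness is immediate. For MBC I invoke Proposition \ref{prop:2} together with the freedom left in choosing the small chord to arrange that neither $x_{j^{\prime},0,3}$ nor $x_{j^{\prime},0,4}$ is a horizontal or vertical pole of $S_j$ or of $S_{\mathcal{S}}$; combined with the hypothesis that $(\mathcal{S}, D_{\mathcal{S}})$ is already MBC, this yields the MBC property for $({\mathcal{S}}^{\prime}, D_{{\mathcal{S}}^{\prime}})$.
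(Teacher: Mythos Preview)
Your approach---verifying the axioms of Definitions~\ref{def:3} and~\ref{def:4} by a local analysis near $x_{j^{\prime},0}$, split into the convex-supported, convex-unsupported, and concave-restricted subcases---is exactly the ``investigating carefully'' that constitutes the paper's own proof; the paper offers nothing beyond that phrase and three illustrative figures (FIGUREs~\ref{fig:2}--\ref{fig:4}), so your sketch is already considerably more detailed than the original. One imprecision worth tightening: your assertion that the new circle is disjoint from $\bigcup_{S\in\mathcal{S}}S$ outside $U$ is neither what is required (Definition~\ref{def:3} only constrains intersections lying in $\overline{D_{\mathcal{S}}}$) nor guaranteed by smallness of the CS-region alone, since the circular secant $S_{\mathcal{S}}$ itself may have large radius and meet circles of $\mathcal{S}$ far from $x_{j^{\prime},0}$; the correct localization is that $S_{\mathcal{S}}\cap\overline{D_{\mathcal{S}}}\subset U$, and correspondingly that $D_{{\mathcal{S}}^{\prime}}$ agrees with $D_{\mathcal{S}}$ outside $U$, which does follow once one separates the minor-arc and major-arc configurations of $S_{\mathcal{S},x_{j^{\prime},0}}$.
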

\begin{proof}[Notes on our proof of Theorem \ref{thm:1}]
We can check this by investigating carefully. We only show explicit cases FIGUREs \ref{fig:2}, \ref{fig:3} and \ref{fig:4}.

\begin{figure}
	\includegraphics[width=80mm,height=80mm]{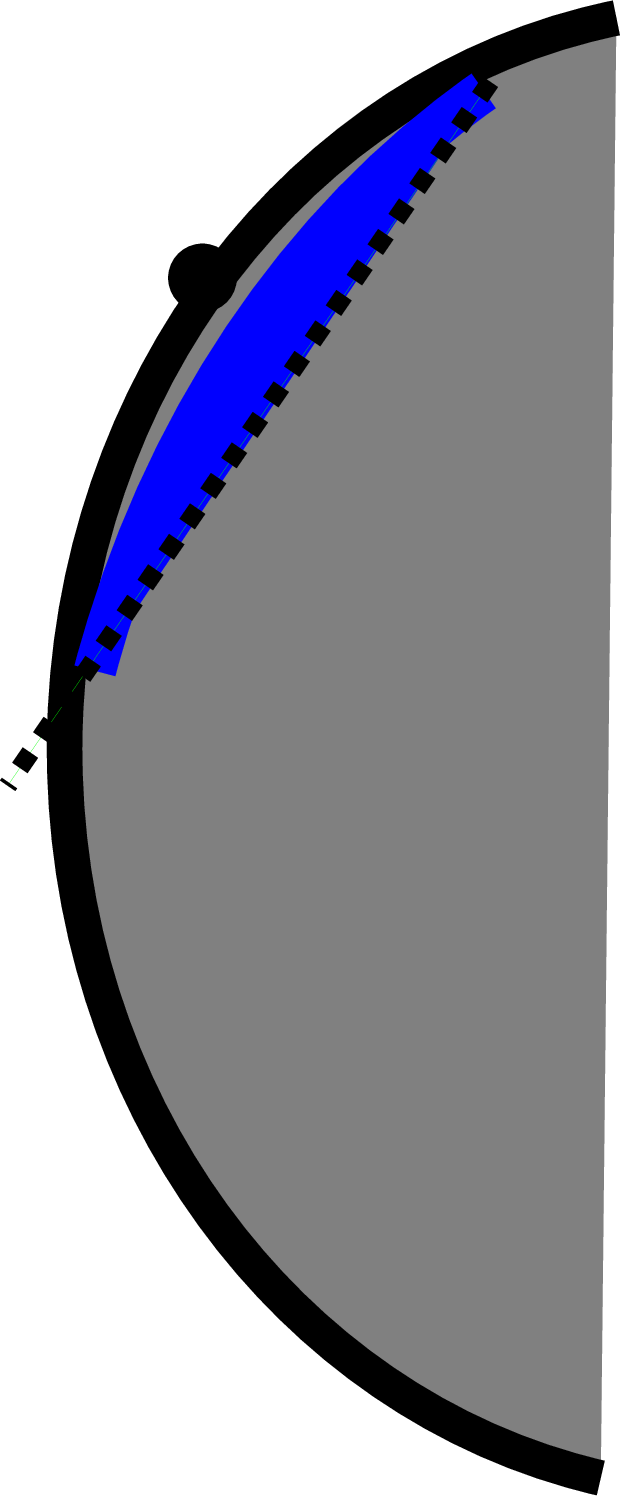}
	\caption{A supported CS-region at $(D_{\mathcal{S}},x_{j^{\prime},0})$ for Theorem \ref{thm:1} (\ref{thm:1.1}) and Theorem \ref{thm:2} (\ref{thm:2.1}).}
	\label{fig:2}
\end{figure}
\begin{figure}
	\includegraphics[width=80mm,height=80mm]{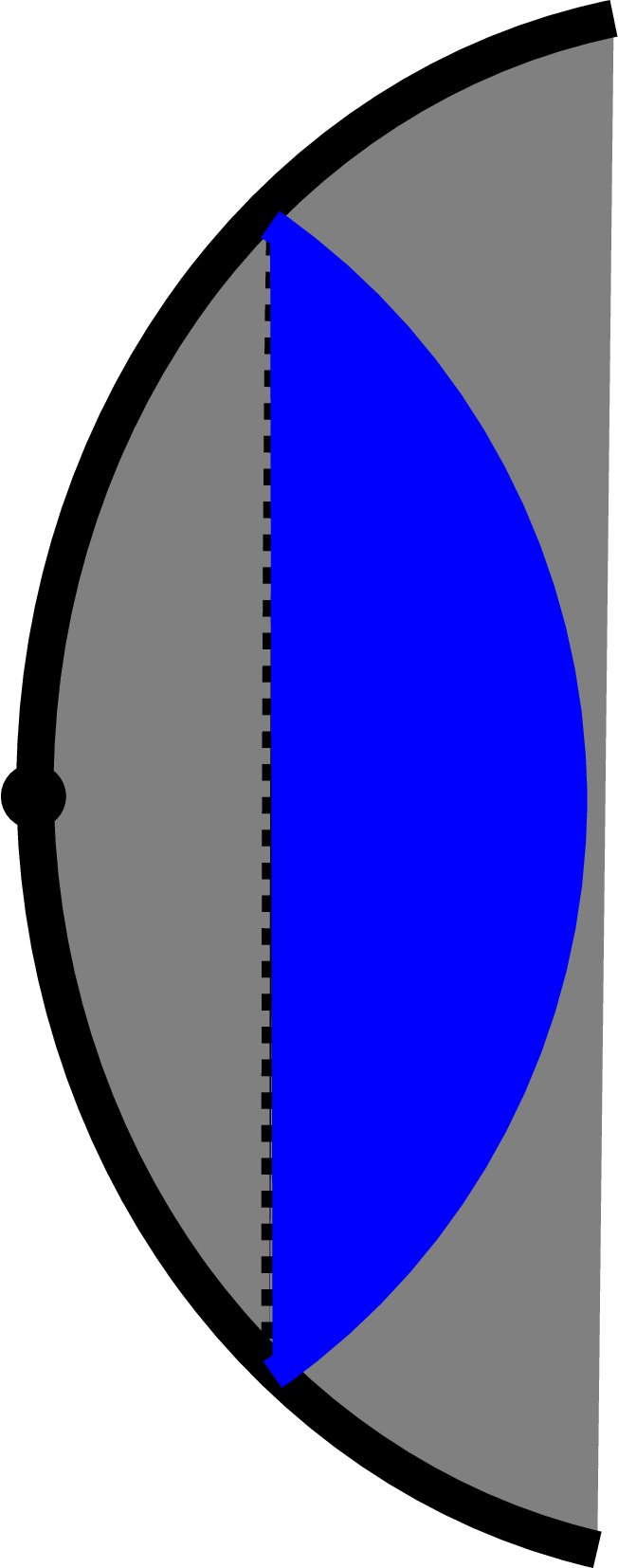}
	\caption{An unsupported CS-region at $(D_{\mathcal{S}},x_{j^{\prime},0})$ for Theorem \ref{thm:1} (\ref{thm:1.1}) and Theorem \ref{thm:2} (\ref{thm:2.2.2}).}
	\label{fig:3}
\end{figure}
\begin{figure}
	\includegraphics[width=80mm,height=80mm]{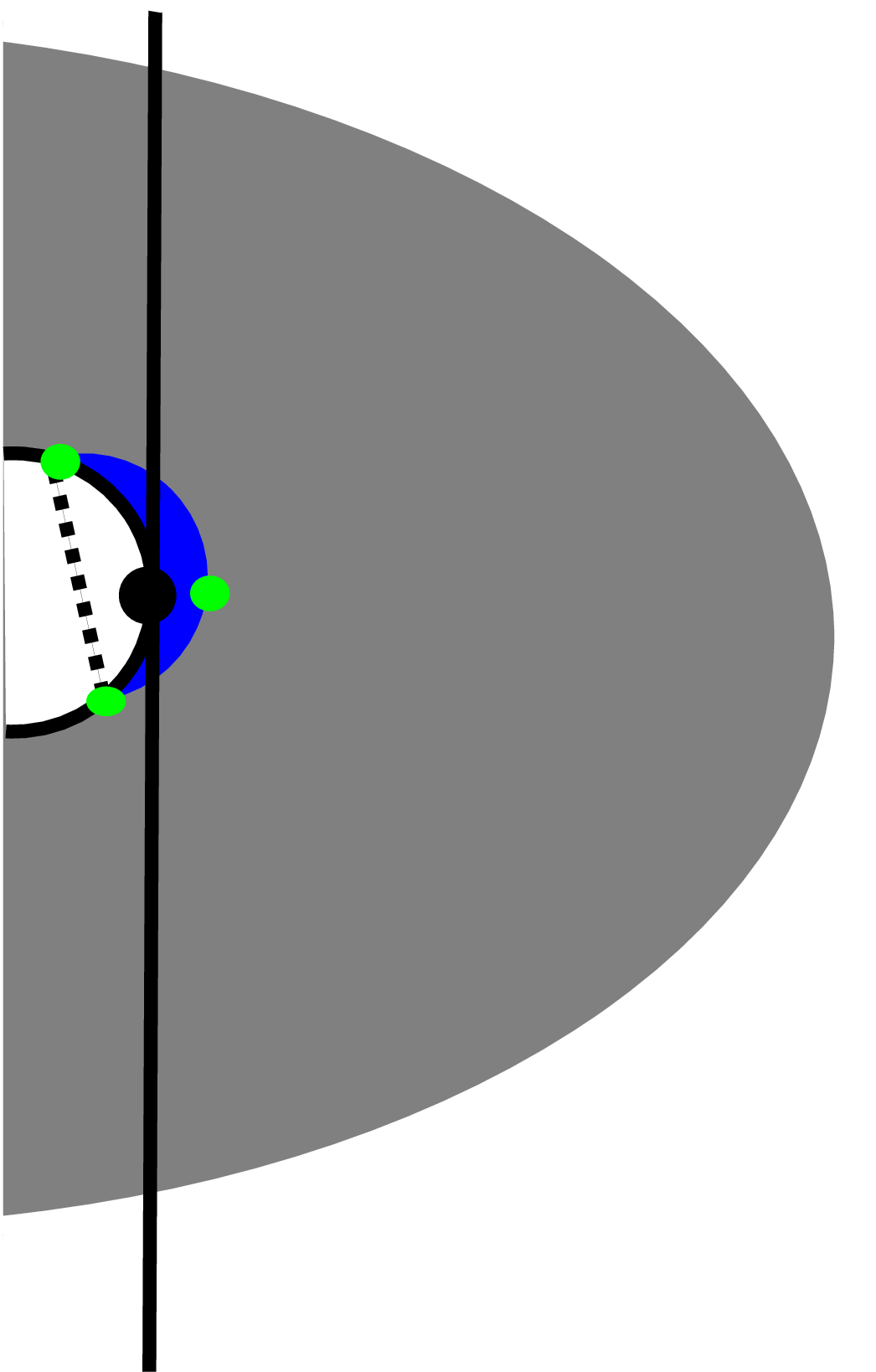}
	\caption{The intersection of a supported CS-region at $(D_{\mathcal{S}},x_{j^{\prime},0})$ and $\overline{D_{\mathcal{S}}}$ for Theorem \ref{prop:2} (\ref{prop:2.2}) and Theorem \ref{thm:5}.}
	\label{fig:4}
\end{figure}
\end{proof}
The following is a variant of \cite[Theorem 2]{kitazawa3} where \cite[Theorem 2]{kitazawa3} is for MBCC arrangements. 
\begin{Thm}
 \label{thm:2}
In Theorem \ref{thm:1}, the local change of the Poincar\'e-Reeb V-digraph of $D_{\mathcal{S}}$ for ${\pi}_{2,1,1}$ to that of $D_{{\mathcal{S}}^{\prime}}$ for ${\pi}_{2,1,1}$ is as follows.
\begin{enumerate}
\item \label{thm:2.1} In the case Proposition \ref{prop:2} {\rm (}\ref{prop:2.1}{\rm )} or {\rm (}\ref{prop:2.2}{\rm )}, the change is explained as either of the following.
\begin{enumerate}
\item \label{thm:2.1.1} For the unique edge $e$ with the interior $e^{\circ} \ni q_{D_{\mathcal{S},1}}(x_{j^{\prime},0})$, two vertices
$v_{x_{j^{\prime},0},1}$ and $v_{x_{j^{\prime},0},2}$ sufficiently close to $q_{D_{\mathcal{S},1}}(x_{j^{\prime},0})$ are added in the interior $e^{\circ}$, in such a way that the relation $V_{D_{\mathcal{S},1}}(v_{x_{j^{\prime},0},1})<q_{D_{\mathcal{S},1}}(x_{j^{\prime},0})<V_{D_{\mathcal{S},1}}(v_{x_{j^{\prime},0},2})$ holds.
\item \label{thm:2.1.2} For the unique vertex $q_{D_{\mathcal{S},1}}(x_{j^{\prime},0})$ and the unique pair $(e_{x_{j^{\prime},0},1},e_{x_{j^{\prime},0},2})$ of oriented edges, we add two vertices
$v_{x_{j^{\prime},0},1} \in {e_{x_{j^{\prime},0},1}}^{\circ}$ and $v_{x_{j^{\prime},0},2} \in e_{x_{j^{\prime},0},2}^{\circ}$ sufficiently close to $q_{D_{\mathcal{S},1}}(x_{j^{\prime},0})$ in their interiors{\rm :} originally the edge $e_{x_{j^{\prime},0},1}$ enters the vertex $q_{D_{\mathcal{S},1}}(x_{j^{\prime},0})$ and the edge $e_{x_{j^{\prime},0},2}$ departs from the vertex $q_{D_{\mathcal{S},1}}(x_{j^{\prime},0})$. 
\end{enumerate}
\item \label{thm:2.2} In the case Proposition \ref{prop:2} {\rm (}\ref{prop:2.3}{\rm )} and Theorem \ref{thm:1} {\rm (}\ref{thm:1.1}{\rm )}, ${\pi}_{2,1,1}(x_{j^{\prime},0})$ is the minimum or the maximum in the image of the function ${\pi}_{2,1,1} {\mid}_{\overline{D_{\mathcal{S}}}}$ and $x_{j^{\prime},0}$ is the unique point such that the value is the minimum or the maximum.
Let ${\pi}_{2,1,1}(x_{j^{\prime},0})$ be the minimum in the image. The change is explained as either of the following and both of these two can be realized.
\begin{enumerate}
\item \label{thm:2.2.1} This is for the case of considering a sufficiently small CS-region at $(D_{\mathcal{S}},x_{j^{\prime},0})$ in Theorem \ref{thm:1} {\rm (}\ref{thm:1.1}{\rm )} except an unsupported CS-region $D^{\prime}$ at $(D_{\mathcal{S}},x_{j^{\prime},0})$ with a supporting secant of $(D^{\prime},\mathcal{S})$ being defined for the case $p_{1,2,1}=0$ in Proposition \ref{prop:2} {\rm (}\ref{prop:2.3}{\rm )} and as the unique extension of the chord of $\mathcal{S}$. 

The unique edge $e$ containing $q_{D_{\mathcal{S},1}}(x_{j^{\prime},0})$ as a vertex is changed into two adjacent oriented edges $e_1$ and $e_2$ as follows.

Let $v_e$ be the remaining vertex contained in $e$. Let $e_2$ be a new edge departing from a new vertex ${v_e}^{\prime}$ and entering $v_e$. Let $e_1$ be a new edge departing from a new vertex ${v_{e,0}}$ and entering ${v_e}^{\prime}$. In addition, respecting this structure of the digraph, we define the structure of the new V-digraph $G_{D_{{\mathcal{S}}^{\prime},1}}$ as the digraph associated with a new function $l_{G_{D_{{\mathcal{S}}^{\prime},1}}}:=V_{D_{{\mathcal{S}}^{\prime}},1}$ on the vertex set whose values at  ${v_{e,0}}$ and ${v_e}^{\prime}$ are sufficiently close to ${\pi}_{2,1,1}(x_{j^{\prime},0})$ and whose values at the remaining vertices are same as those of the {\rm (}originally associated{\rm )} function $l_{G_{D_{\mathcal{S},1}}}:=V_{D_{{\mathcal{S}}},1}$.
\item \label{thm:2.2.2} This is for the case of considering a sufficiently small unsupported CS-region at $(D_{\mathcal{S}},x_{j^{\prime},0})$ in Theorem \ref{thm:1} {\rm (}\ref{thm:1.1}{\rm )} with a supporting secant of $(D^{\prime},\mathcal{S})$ being defined for the case $p_{1,2,1}=0$ in Proposition \ref{prop:2} {\rm (}\ref{prop:2.3}{\rm )} and as the unique extension of the chord of $\mathcal{S}$. 

The unique edge $e$ containing $q_{D_{\mathcal{S},1}}(x_{j^{\prime},0})$ as a vertex is changed into three mutually adjacent oriented edges $e_{1,1}$, $e_{1,2}$ and $e_{2}$ as follows.

Let $v_e$ be the remaining vertex contained in $e$. Let $e_2$ be a new edge departing from a new vertex ${v_e}^{\prime}$ and entering $v_e$. Let $e_{1,i^{\prime}}$ {\rm (}$i^{\prime}=1,2${\rm )} be another new edge departing from another new vertex $v_{e,i^{\prime},0}$ and entering ${v_e}^{\prime}$ {\rm (}$v_{e,1,0} \neq v_{e,2,0}${\rm )}.  In addition, respecting this structure of the digraph, we define the structure of the new V-digraph $G_{D_{{\mathcal{S}}^{\prime},1}}$ as the digraph associated with a new function $l_{G_{D_{{\mathcal{S}}^{\prime},1}}}:=V_{D_{{\mathcal{S}}^{\prime}},1}$ on the vertex set whose values at ${v_{e,i^{\prime},0}}$ and ${v_e}^{\prime}$ are sufficiently close to ${\pi}_{2,1,1}(x_{j^{\prime},0})$, whose values at ${v_{e,1,0}}$ and ${v_{e,2,0}}$ are same, and whose values at the remaining vertices are same as those of the {\rm (}originally associated{\rm )} function $l_{G_{D_{\mathcal{S},1}}}:=V_{D_{{\mathcal{S}}},1}$.
\end{enumerate}
In the case ${\pi}_{2,1,1}(x_{j^{\prime},0})$ is the maximum in the image of the function ${\pi}_{2,1,1} {\mid}_{\overline{D_{\mathcal{S}}}}$, we have a similar corresponding fact.
\end{enumerate}
\end{Thm}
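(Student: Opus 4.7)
The plan is to reduce the problem to a local analysis near $x_{j',0}$ and enumerate the new combinatorial data introduced by the added circle $S_\mathcal{S}$. Since the CS-region used in Theorem \ref{thm:1} is sufficiently small, the symmetric difference $\overline{D_{\mathcal{S}'}}\triangle\overline{D_\mathcal{S}}$ lies in an arbitrarily small disk around $x_{j',0}$; hence the V-digraphs $G_{D_\mathcal{S},1}$ and $G_{D_{\mathcal{S}'},1}$ agree, together with their $V$-functions, outside a small ${\pi}_{2,1,1}$-neighborhood of ${\pi}_{2,1,1}(x_{j',0})$. The only candidate new vertices of $G_{D_{\mathcal{S}'},1}$ are the two chord boundary points $x_{j',0,3}, x_{j',0,4}$ (new intersections of $S_\mathcal{S}$ with $S_j$) and those vertical poles of $S_\mathcal{S}$ that happen to lie in $\overline{D_{\mathcal{S}'}}$.

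For Case (\ref{thm:2.1}), since $x_{j',0}$ is not a vertical pole of $S_j$, the chord is not vertical and $S_\mathcal{S}$ stays close to a non-vertical chord near $x_{j',0}$. The vertical poles of $S_\mathcal{S}$, being its $x_1$-extrema, sit at a definite horizontal distance from the chord region and so lie outside any sufficiently small neighborhood of $x_{j',0}$; only the two chord boundary points contribute. Their ${\pi}_{2,1,1}$-coordinates, analyzed via Proposition \ref{prop:2} (\ref{prop:2.1})--(\ref{prop:2.2}), straddle ${\pi}_{2,1,1}(x_{j',0})$. I would then branch on whether $q_{D_\mathcal{S},1}(x_{j',0})$ is interior to a single edge $e$ (giving subcase (\ref{thm:2.1.1}), with both new vertices in $e^\circ$ on either side of $q_{D_\mathcal{S},1}(x_{j',0})$) or coincides with an existing vertex through a degenerate alignment of the vertical fiber (giving subcase (\ref{thm:2.1.2}), with the two new vertices landing in the interiors of the two adjacent oriented edges $e_{x_{j',0},1}, e_{x_{j',0},2}$).

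For Case (\ref{thm:2.2}), $x_{j',0}$ is a vertical pole of $S_j$ and ${\pi}_{2,1,1}(x_{j',0})$ is the unique extremum of ${\pi}_{2,1,1}\mid_{\overline{D_\mathcal{S}}}$; I treat the minimum case, the maximum case being symmetric. Removing a small neighborhood of $x_{j',0}$ and inserting the arc of $S_\mathcal{S}$ moves the new minimum to an appropriate vertical pole of $S_\mathcal{S}$ now lying in $\overline{D_{\mathcal{S}'}}$; this yields the vertex $v_e'$ with $V$-value arbitrarily close to ${\pi}_{2,1,1}(x_{j',0})$ as the CS-region shrinks. The key dichotomy is whether the ${\pi}_{2,1,1}$-fiber at the common $x_1$-value of the chord boundary points stays connected in $\overline{D_{\mathcal{S}'}}$: in subcase (\ref{thm:2.2.1}) it does, either because the chord is not vertical (so the two chord boundary points have distinct $V$-values) or because the supported choice $E_{x_{j'},r_{j'}}=D_{x_{j'},r_{j'}}$ keeps the chord segment inside $\overline{D_{\mathcal{S}'}}$ when $p_{1,2,1}=0$; in subcase (\ref{thm:2.2.2}), the unsupported choice $E_{x_{j'},r_{j'}}={\mathbb{R}}^2-D_{x_{j'},r_{j'}}$ together with $p_{1,2,1}=0$ excises the open chord segment from $\overline{D_{\mathcal{S}'}}$, pinching the fiber into two connected components that become $v_{e,1,0}$ and $v_{e,2,0}$ with equal $V$-values, so the three-edge configuration follows.

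The main obstacle is the clean separation of (\ref{thm:2.2.1}) from (\ref{thm:2.2.2}): one must verify that $p_{1,2,1}=0$ together with the unsupported choice of $E_{x_{j'},r_{j'}}$ is exactly the combination that disconnects the critical fiber, while every other sufficiently small choice in Theorem \ref{thm:1} (\ref{thm:1.1}) preserves connectivity. I would handle this by direct geometric inspection of the intersection of vertical $x_1$-fibers with $\overline{D_{\mathcal{S}'}}$ in each configuration, guided by FIGUREs \ref{fig:2}--\ref{fig:4}, and by checking that the appropriate vertical pole of $S_\mathcal{S}$ lies in $\overline{D_{\mathcal{S}'}}$ and furnishes the merging vertex $v_e'$ with $V$-value approaching ${\pi}_{2,1,1}(x_{j',0})$ as the CS-region shrinks.
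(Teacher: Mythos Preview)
Your approach—localize near $x_{j',0}$, enumerate the new vertex candidates (the two chord boundary points and any vertical pole of $S_\mathcal{S}$ that lands on $\partial\overline{D_{\mathcal{S}'}}$), and then split into cases—is exactly what the paper does; its own proof consists only of the sentence ``We can prove Theorem \ref{thm:2} by investigating carefully one by one'' together with a pointer to FIGUREs \ref{fig:2}--\ref{fig:4} and to \cite{kitazawa3}. You have supplied considerably more of the actual case analysis than the paper.

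There is, however, a mislabeling in your treatment of case (\ref{thm:2.2}) that would derail the computation if followed literally. You write that ``the new minimum [is moved] to an appropriate vertical pole of $S_\mathcal{S}$\ldots; this yields the vertex $v_e'$.'' But $v_e'$ is never the new minimum: in (\ref{thm:2.2.1}) the minimum is $v_{e,0}$ and $v_e'$ is the \emph{larger} of the two new vertices, while in (\ref{thm:2.2.2}) the minima are $v_{e,1,0},v_{e,2,0}$ and $v_e'$ is the merging saddle above them. Moreover, a vertical pole of $S_\mathcal{S}$ need not appear at all. When the chord is non-vertical (still part of (\ref{thm:2.2.1})), the short arc of $S_\mathcal{S}$ lying on $\partial\overline{D_{\mathcal{S}'}}$ has tangent close to the chord direction and so carries no vertical pole; then $v_{e,0}$ and $v_e'$ are simply the two chord boundary points, ordered by their $x_1$-coordinates. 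The vertical pole of $S_\mathcal{S}$ enters only when $p_{1,2,1}=0$, and in that situation it plays the role of $v_{e,0}$ in the supported case (the two chord boundary points, sharing one $x_1$-value, collapse to the single vertex $v_e'$) and of $v_e'$ in the unsupported case (the chord boundary points become the two separate minima $v_{e,i,0}$). Your fiber-connectivity dichotomy for $p_{1,2,1}=0$ is correct and is precisely what separates (\ref{thm:2.2.1}) from (\ref{thm:2.2.2}); you only need to straighten out which geometric feature furnishes which named vertex in each sub-case.
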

\begin{proof}[Notes on our proof of Theorem \ref{thm:2}]
We can prove Theorem \ref{thm:2} by investigating carefully one by one. This is also important in \cite{kitazawa3}.
\end{proof}

For an NI arrangement $(\mathcal{S},D_{\mathcal{S}})$ of circles, choose a point $x_{j^{\prime},0}  \in \overline{D_{\mathcal{S}}}$ contained in two distinct circles $S_{j_1},S_{j_2} \in \mathcal{S}$ and choose a sufficiently small circle ${S^{\prime}}_{x_{j^{\prime},0}}$ centered there and bounds the closed disk ${D^{\prime}}_{x_{j^{\prime},0}}$.
We have the unique chord of $\mathcal{S}$ connecting the two points $x_{j^{\prime},0,1}$ and $x_{j^{\prime},0,2}$ of ${S^{\prime}}_{x_{j^{\prime},0}} \bigcap \overline{D_{\mathcal{S}}} \bigcap (S_{j_1} \bigcup S_{j_2}):=\{x_{j^{\prime},0,1}, x_{j^{\prime},0,2}\}$.
We have a chord of $\mathcal{S}$ which is also a subset of $ {D^{\prime}}_{x_{j^{\prime},0}}$ and the two chord boundary points of which are denoted by $x_{j^{\prime},0,3}$ and $x_{j^{\prime},0,4}$, respectively. We also have one such that the five points $x_{j^{\prime},0,1}$, $x_{j^{\prime},0,3}$, $x_{j^{\prime},0}$, $x_{j^{\prime},0,4}$ and $x_{j^{\prime},0,2}$ are mutually distinct and located on the connected curve ${D^{\prime}}_{x_{j^{\prime},0}} \bigcap (S_{j_1} \bigcup S_{j_2})$ in this order.  
\begin{Prop}
\label{prop:3}
For an arbitrary chord of $\mathcal{S}$ above, it is always a subset of $\overline{D_{\mathcal{S}}} \bigcap {D^{\prime}}_{x_{j^{\prime},0}}$. We call such a chord of $\mathcal{S}$ a {\rm chord at $(D_{\mathcal{S}},x_{j^{\prime},0})$}.
\end{Prop}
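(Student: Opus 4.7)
The idea is to exploit the contrast with Proposition \ref{prop:1}: here $x_{j^{\prime},0}$ is a transverse intersection of two bounding circles of $\overline{D_{\mathcal{S}}}$, so the local tangent cone of $\overline{D_{\mathcal{S}}}$ at $x_{j^{\prime},0}$ is a convex angular wedge into which every admissible chord is forced, and the ``either/or'' alternative collapses to the single claimed case.

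First I would use the prescribed order $x_{j^{\prime},0,1}, x_{j^{\prime},0,3}, x_{j^{\prime},0}, x_{j^{\prime},0,4}, x_{j^{\prime},0,2}$ along the local boundary piece, together with the fact that $x_{j^{\prime},0}$ is the unique transverse crossing of $S_{j_1}$ and $S_{j_2}$ inside ${D^{\prime}}_{x_{j^{\prime},0}}$, to conclude that $x_{j^{\prime},0,3}$ and $x_{j^{\prime},0,4}$ lie on distinct bounding arcs, say $x_{j^{\prime},0,3}$ on a sub-arc of $S_{j_1}$ and $x_{j^{\prime},0,4}$ on a sub-arc of $S_{j_2}$, and that both sub-arcs form part of $\partial \overline{D_{\mathcal{S}}}$ near $x_{j^{\prime},0}$. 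The transversality clause in Definition \ref{def:3} then yields linearly independent tangent vectors $v_1, v_2$ to $S_{j_1}, S_{j_2}$ at $x_{j^{\prime},0}$ directed along these respective sub-arcs, so the closed wedge $W := \{a v_1 + b v_2 : a \geq 0, b \geq 0\}$ is convex with angular opening strictly less than $\pi$.

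Next I would identify $W^{\circ}$ with the tangent cone of $\overline{D_{\mathcal{S}}}$ at $x_{j^{\prime},0}$: the two transverse arcs split a small neighborhood of $x_{j^{\prime},0}$ into four open sectors, and $D_{\mathcal{S}}$ locally coincides with the one whose tangent cone is $W^{\circ}$. Hence for ${D^{\prime}}_{x_{j^{\prime},0}}$ sufficiently small, $\overline{D_{\mathcal{S}}} \cap {D^{\prime}}_{x_{j^{\prime},0}}$ is a $C^1$-small perturbation of $W \cap {D^{\prime}}_{x_{j^{\prime},0}}$, with its two bounding arcs realized as smooth graphs over the half-lines $\{s v_1 : s \geq 0\}$ and $\{t v_2 : t \geq 0\}$. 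Parametrizing $x_{j^{\prime},0,3} = x_{j^{\prime},0} + s v_1 + O(s^2)$ and $x_{j^{\prime},0,4} = x_{j^{\prime},0} + t v_2 + O(t^2)$ with small $s, t > 0$, a typical point on the chord reads
\[
x_{j^{\prime},0} + (1-\lambda) s v_1 + \lambda t v_2 + O(s^2 + t^2), \qquad \lambda \in [0, 1],
\]
whose leading-order displacement lies in $W$ and, for $\lambda \in (0, 1)$, strictly inside $W^{\circ}$. Convexity of ${D^{\prime}}_{x_{j^{\prime},0}}$ confines the chord to the small disk trivially.

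The one delicate point will be uniform control of the quadratic correction: near the endpoints $\lambda = 0, 1$ the leading-order distance of the chord to $\partial W$ is itself small, so one cannot simply compare the chord with the linear wedge $W$. I would handle this by choosing linear coordinates adapted to $v_1, v_2$, writing the two bounding arcs of $\overline{D_{\mathcal{S}}}$ as smooth graphs in these coordinates (each differing from its tangent half-line by a quadratic correction), and checking directly that at every $\lambda \in [0, 1]$ the chord lies on the $D_{\mathcal{S}}$-side of both such graphs. Since the chord is a single straight segment while the two graphs are smooth with matching quadratic deviations, this comparison reduces to a routine sign check that holds once ${D^{\prime}}_{x_{j^{\prime},0}}$ is small enough, completing the inclusion in $\overline{D_{\mathcal{S}}} \cap {D^{\prime}}_{x_{j^{\prime},0}}$.
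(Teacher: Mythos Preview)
The paper itself gives no proof of Proposition~\ref{prop:3}; as with Propositions~\ref{prop:1} and~\ref{prop:2} it is treated as something one checks directly from the definitions. The intended one-line argument is purely topological: by the very definition of a \emph{chord of $\mathcal{S}$}, the open segment meets $\bigcup_{S_{x_j,r_j}\in\mathcal{S}} S_{x_j,r_j}$ only at its two endpoints, so its interior lies in a single connected component of the complement. Inside ${D^{\prime}}_{x_{j^{\prime},0}}$ the transverse crossing of $S_{j_1}$ and $S_{j_2}$ cuts out four local components, and $D_{\mathcal{S}}$ is the unique one whose closure contains both $x_{j^{\prime},0,3}\in\alpha_1\setminus\{x_{j^{\prime},0}\}$ and $x_{j^{\prime},0,4}\in\alpha_2\setminus\{x_{j^{\prime},0}\}$ (the chord cannot pass through $x_{j^{\prime},0}$ itself, since that would be a third intersection with the circles). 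This forces the interior of the chord into $D_{\mathcal{S}}$.

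Your approach never invokes this chord property, and that is a genuine gap rather than a stylistic difference. The ``routine sign check'' you defer to in the last paragraph is in fact false for arbitrary segments with endpoints on the two arcs. Take adapted coordinates with $v_1=(1,0)$, $v_2=(0,1)$, the arc of $S_{j_1}$ given locally by $b=c_1 a^2$ with $c_1<0$ (so this arc curves \emph{away} from $D_{\mathcal{S}}$), and choose $x_{j^{\prime},0,3}=(s,c_1 s^2)$, $x_{j^{\prime},0,4}\approx(c_2 t^2,t)$ with $0<t<|c_1|s^2$. A direct computation of $\tfrac{d}{d\lambda}\bigl(b(\lambda)-c_1 a(\lambda)^2\bigr)$ at $\lambda=0$ gives $c_1 s^2+t<0$, so the straight segment initially dips \emph{below} $\alpha_1$, into the adjacent region rather than into $D_{\mathcal{S}}$. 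Such a segment is simply not a chord of $\mathcal{S}$ (to reach $x_{j^{\prime},0,4}$ it must recross a circle), but your wedge--plus--quadratic analysis has no mechanism to exclude it. The fix is exactly the definitional observation above; once you use it, the tangent-cone and $O(s^2+t^2)$ machinery becomes unnecessary.
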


For any circular secant $S_{\mathcal{S}}$ of $\mathcal{S}$ passing the two chord boundary points of any chord $C_{D_{\mathcal{S}},x_{j^{\prime},0}}$ at $(D_{\mathcal{S}},x_{j^{\prime},0})$, we have a CS-region $D$ of $\mathcal{S}$ such that 
$x_{j^{\prime},0}$ is contained in the closure $\overline{D}$, that $S_{\mathcal{S}}$ is a supporting secant of $(D,\mathcal{S})$ and that a labeling set for $(D,\mathcal{S})$ is a two-element set $\{S_{j_1},S_{j_2}\}$. For a point $(p_{j_1,1},p_{j_1,2}) \neq x_{j^{\prime},0}$ in a supporting subset $C_{\mathcal{S},S_{j_1}}$ of $(D,S_{j_1},\mathcal{S})$ and a point $(p_{j_2,1},p_{j_2,2}) \neq x_{j^{\prime},0}$ in a supporting subset $C_{\mathcal{S},S_{j_2}}$ of $(D,S_{j_2},\mathcal{S})$, we have
 a vector $((p_{j_1,1},p_{j_1,2})-x_{j^{\prime},0}, (p_{j_2,1},p_{j_2,2})-x_{j^{\prime},0}) \in {\mathbb{R}}^4$. Then we uniquely have the corresponding tuple of the signs of these components, whose values are not $0$, depending on $(\mathcal{S},D_{\mathcal{S}})$, the point $x_{j^{\prime},0}$ and the ordered pair $(S_{j_1},S_{j_2})$ only, and independent of a supporting secant $S_{\mathcal{S}}$ of $(D,\mathcal{S})$, and a supporting subset $C_{\mathcal{S},S_{j_i}}$ of $(D,S_{j_i},\mathcal{S})$ {\rm (}$i=1,2${\rm )}, for example. 

\begin{Def}
We call the tuple of the $4$ signs above {\it the signs at $(D_{\mathcal{S}},x_{j^{\prime},0})$}.
\end{Def}
\begin{Thm}
\label{thm:3}
\begin{enumerate}
\item
\label{thm:3.1}
\begin{enumerate}
\item \label{thm:3.1.1} Let $x_{j^{\prime},0}$ be a point such that the sign of the 1st component and that of the 3rd component of the signs at $(D_{\mathcal{S}},x_{j^{\prime},0})$ are same and that the sign of the 2nd component and that of the 4th component of the signs at $(D_{\mathcal{S}},x_{j^{\prime},0})$ are same. Then we can have all cases of chords at $(D_{\mathcal{S}},x_{j^{\prime},0})$ as in Proposition \ref{prop:2} {\rm (}\ref{prop:2.2}{\rm )} and {\rm (}\ref{prop:2.3}{\rm )}.
\item \label{thm:3.1.2} Let $x_{j^{\prime},0}$ be a point such that the sign of the 1st component and that of the 3rd component of the signs at $(D_{\mathcal{S}},x_{j^{\prime},0})$ are same and that the sign of the 2nd component and that of the 4th component of the signs at $(D_{\mathcal{S}},x_{j^{\prime},0})$ are mutually distinct. Then 
we can have all cases of chords at $(D_{\mathcal{S}},x_{j^{\prime},0})$ as in Proposition \ref{prop:2} {\rm (}\ref{prop:2.3}{\rm )} and we cannot have chords at $(D_{\mathcal{S}},x_{j^{\prime},0})$ as in Proposition \ref{prop:2} {\rm (}\ref{prop:2.2}{\rm )} with $p_{1,2,2}=0$.

\item \label{thm:3.1.3} Let $x_{j^{\prime},0}$ be a point such that the sign of the 1st component and that of the 3rd component of the signs at $(D_{\mathcal{S}},x_{j^{\prime},0})$ are distinct and that the sign of the 2nd component and that of the 4th component of the signs at $(D_{\mathcal{S}},x_{j^{\prime},0})$ are same. Then we can have all cases of chords at $(D_{\mathcal{S}},x_{j^{\prime},0})$ as in Proposition \ref{prop:2} {\rm (}\ref{prop:2.2}{\rm )} and we cannot have chords at $(D_{\mathcal{S}},x_{j^{\prime},0})$ as in Proposition \ref{prop:2} {\rm (}\ref{prop:2.3}{\rm )} with $p_{1,2,1}=0$.
\item \label{thm:3.1.4} Let $x_{j^{\prime},0}$ be a point such that the sign of the 1st component and that of the 3rd component of the signs at $(D_{\mathcal{S}},x_{j^{\prime},0})$ are distinct and that the sign of the 2nd component and that of the 4th component of the signs at $(D_{\mathcal{S}},x_{j^{\prime},0})$ are distinct. Then we can have a case of chords at $(D_{\mathcal{S}},x_{j^{\prime},0})$ as in Proposition \ref{prop:2} {\rm (}\ref{prop:2.1}{\rm )} and this is the unique case.
\end{enumerate}
\item \label{thm:3.2} Let $x_{j^{\prime},0}$ be a point such that for some straight line $L_{x_{j^{\prime},0}}$, the intersection satisfies $\overline{D_{\mathcal{S}}} \bigcap L_{x_{j^{\prime},0}}=\{x_{j^{\prime},0}\}$. Then we can find chords at $(D_{\mathcal{S}},x_{j^{\prime},0})$ to do a discussion similar to that in Theorem \ref{thm:1} {\rm (}\ref{thm:1.1}{\rm )}. 
\end{enumerate}
%
\end{Thm}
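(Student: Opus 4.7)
The plan is to reduce Theorem \ref{thm:3} to a local analysis at $x_{j^{\prime},0}$ in terms of the tangent directions of the supporting subsets. Since $x_{j^{\prime},0}$ is a transverse intersection point of the two distinct circles $S_{j_1}, S_{j_2} \in \mathcal{S}$, their tangent lines at $x_{j^{\prime},0}$ span $T_{x_{j^{\prime},0}} {\mathbb{R}}^2$. Let $v_i$ denote the unit tangent vector to the supporting subset $C_{\mathcal{S},S_{j_i}}$ at $x_{j^{\prime},0}$, oriented in the direction in which that supporting subset emanates from $x_{j^{\prime},0}$. By the very definition of the signs at $(D_{\mathcal{S}},x_{j^{\prime},0})$, the signs of the two components of $v_1$ coincide with the 1st and 2nd entries of the sign tuple, and those of $v_2$ with the 3rd and 4th. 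These tangent vectors are canonical, reflecting the independence of the signs from the particular choices of supporting secant and supporting subset.

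For part (\ref{thm:3.1}), I would parameterize chord boundary points $x_{j^{\prime},0,3} \in C_{\mathcal{S},S_{j_1}}$ and $x_{j^{\prime},0,4} \in C_{\mathcal{S},S_{j_2}}$, chosen arbitrarily close to $x_{j^{\prime},0}$, as $x_{j^{\prime},0}+t_i v_i + O(t_i^2)$ with $t_i>0$. The resulting chord direction $p_2-p_1 = t_2 v_2 - t_1 v_1 + O(t_1^2+t_2^2)$ then ranges, as $(t_1,t_2)$ varies over the positive quadrant, over an open cone whose closure is spanned by $v_2$ and $-v_1$. The four subcases (\ref{thm:3.1.1})--(\ref{thm:3.1.4}) reduce to an explicit check, according to which quadrants of ${\mathbb{R}}^2$ contain $v_1$ and $v_2$, of which chord sign patterns from Proposition \ref{prop:2} (\ref{prop:2.1})--(\ref{prop:2.3}) are represented by vectors of this cone: in (\ref{thm:3.1.1}) the cone meets both coordinate axes (allowing chords with $p_{1,2,1}=0$ and chords with $p_{1,2,2}=0$, together with chords of both same-sign and different-sign type), in (\ref{thm:3.1.2}) and (\ref{thm:3.1.3}) it meets exactly one axis, and in (\ref{thm:3.1.4}) the vectors $v_2$ and $-v_1$ lie in a common open coordinate quadrant, so the cone is contained in that single open quadrant and every chord has the unique sign pattern prescribed by that quadrant.

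For part (\ref{thm:3.2}), the hypothesis $\overline{D_{\mathcal{S}}}\cap L_{x_{j^{\prime},0}}=\{x_{j^{\prime},0}\}$ implies that $\overline{D_{\mathcal{S}}}$ is contained in one of the closed half-planes bounded by $L_{x_{j^{\prime},0}}$, so in particular every supporting subset of $(D,S_j,\mathcal{S})$ near $x_{j^{\prime},0}$ lies on that same side of $L_{x_{j^{\prime},0}}$. Hence the tangent vectors to the supporting subsets at $x_{j^{\prime},0}$ each have strictly positive component along the inward normal to $L_{x_{j^{\prime},0}}$, and a chord at $(D_{\mathcal{S}},x_{j^{\prime},0})$ joining points of two such supporting subsets can be taken on a line parallel to and arbitrarily close to $L_{x_{j^{\prime},0}}$ on the $\overline{D_{\mathcal{S}}}$-side. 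Choosing a circular secant of sufficiently small curvature through its two chord boundary points produces a sufficiently small supported CS-region at $(D_{\mathcal{S}},x_{j^{\prime},0})$, and the verification that the resulting pair is again an NI arrangement, preserving the subclass of NCI or MBC arrangement, proceeds verbatim as in the argument for Theorem \ref{thm:1} (\ref{thm:1.1}) applied to a convex $D_{\mathcal{S}}$-point.

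The main obstacle lies in part (\ref{thm:3.1}): the signs at $(D_{\mathcal{S}},x_{j^{\prime},0})$ only encode the leading-order directions $v_i$, whereas the chord endpoints are constrained to the nonlinear curves $S_{j_i}$. One must therefore confirm that the open cone of realizable chord directions is stable under the $O(t_i^2)$ corrections, and that the boundary rays corresponding to $p_{1,2,1}=0$ or $p_{1,2,2}=0$ are genuinely attained by actual chords rather than merely approached as limits. This reduces to openness of the map $(t_1,t_2)\mapsto p_2-p_1$ near the origin of the positive $(t_1,t_2)$-quadrant, which follows from the linear independence of $v_1$ and $v_2$ together with the implicit function theorem applied to the defining equations of the two circles.
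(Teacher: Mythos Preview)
Your argument is substantially more detailed than the paper's own proof, which reads in full: ``As the previous theorems, we can prove Theorem~\ref{thm:3} by investigating carefully one by one. We only present FIGUREs~\ref{fig:5} and~\ref{fig:6} for some cases of chords at $(D_{\mathcal{S}},x_{j^{\prime},0})$, for example.'' The paper therefore offers no argument beyond the pictures, and your reduction to the cone $\{t_2 v_2 - t_1 v_1 : t_1,t_2>0\}$ of first-order chord directions, followed by a quadrant analysis of $v_1,v_2$, supplies a genuine mechanism that the paper omits.

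There is, however, one genuine gap in your reduction. Your opening identification ``the signs of the two components of $v_1$ coincide with the 1st and 2nd entries of the sign tuple'' presupposes that neither component of $v_i$ vanishes, i.e.\ that $x_{j^{\prime},0}$ is not a horizontal or vertical pole of $S_{j_i}$. For a general NI arrangement this is not excluded (only the NI-MBC subclass of Definition~\ref{def:4} forbids it), and when it fails the relevant entry of the sign tuple is determined by the \emph{curvature} direction of the arc, not by the tangent. Concretely, if $v_1=(1,0)$ and the arc of $S_{j_1}$ curves upward, the 2nd entry of the sign tuple is $+$, yet $-v_1$ lies on the negative $x_1$-axis, so the linear cone has boundary along $\{p_{1,2,2}=0\}$ and your openness/IFT argument no longer places that axis in the \emph{interior} of the image. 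In this situation the realizable chord directions genuinely spill outside the closed linear cone (the second component of $p_2-p_1$ picks up a $-t_1^2/(2r_1)$ term), and whether $p_{1,2,2}=0$ is attainable depends on the sign of that quadratic correction---which is exactly the information recorded by the sign tuple but lost by the tangent vector. You would need a brief second-order analysis in this boundary case to close the argument; once done, the four subcases (\ref{thm:3.1.1})--(\ref{thm:3.1.4}) still come out as stated.

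A smaller point in (\ref{thm:3.2}): ``strictly positive component along the inward normal'' can fail when $L_{x_{j^{\prime},0}}$ happens to be tangent to one of $S_{j_1},S_{j_2}$ (transversality only forbids it being tangent to both). Your parallel-line construction still works in that case---the tangent arc curves into the half-plane and is met once by a nearby parallel line---but the sentence as written overstates what is true.
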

\begin{proof}[Notes on our proof of Theorem \ref{thm:3}]
We give notes on our proof of Theorem \ref{thm:3}. As the previous theorems, we can prove Theorem \ref{thm:3} by investigating carefully one by one. We only present FIGUREs \ref{fig:5} and \ref{fig:6} for some cases of chords at $(D_{\mathcal{S}},x_{j^{\prime},0})$, for example.

\end{proof}
The following is a variant of \cite[Theorem 3]{kitazawa3} where \cite[Theorem 3]{kitazawa3} is for MBCC arrangements. This is also regarded as a variant of Theorem \ref{thm:2} and we abuse the notions, notation and rules.
\begin{figure}
	\includegraphics[width=40mm,height=40mm]{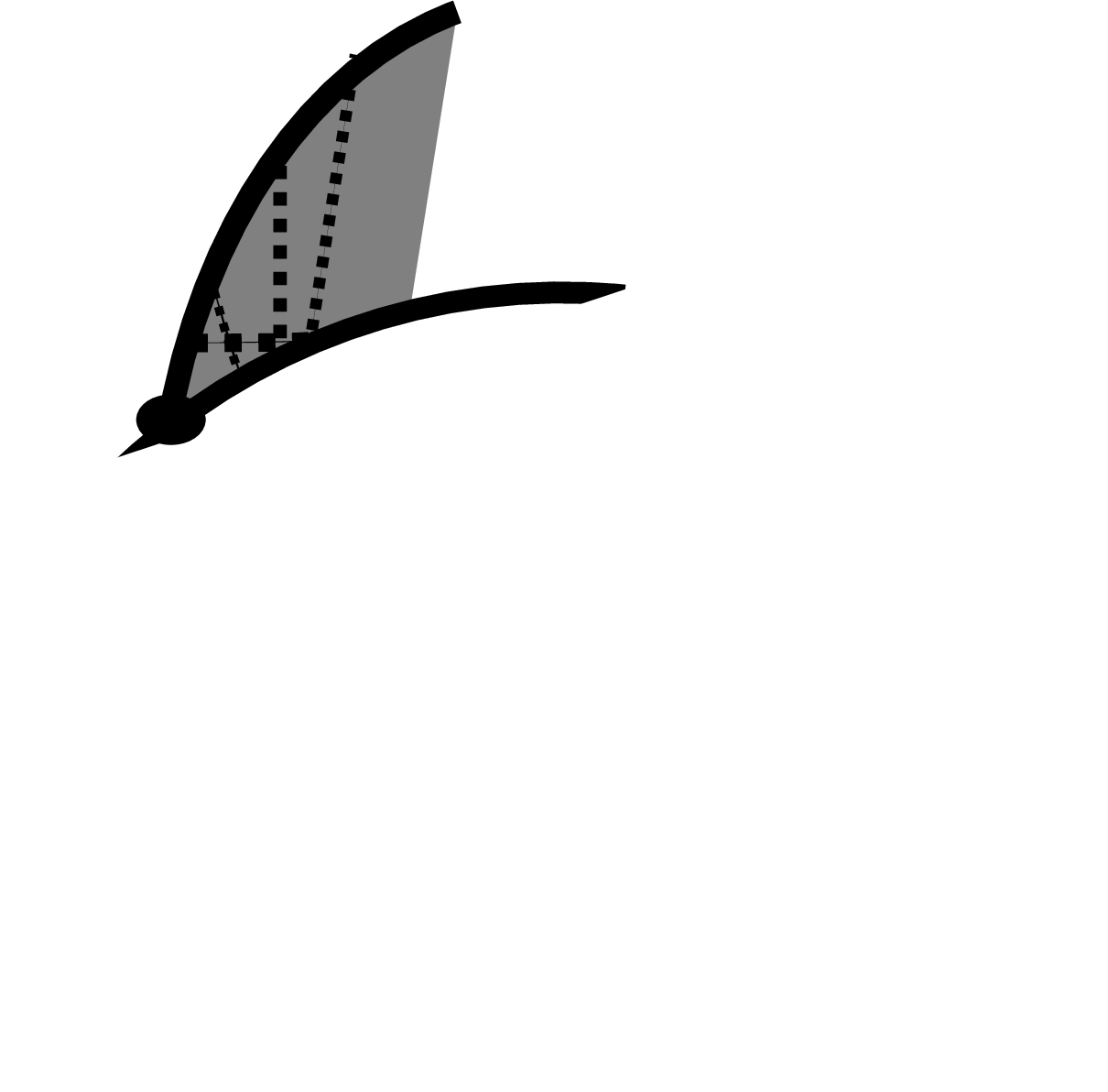}
	\caption{Four types of chords at $(D_{\mathcal{S}},x_{j^{\prime},0})$ for Theorem \ref{thm:3} (\ref{thm:3.1.1}).}
	\label{fig:5}
\end{figure}
\begin{figure}
	\includegraphics[width=40mm,height=40mm]{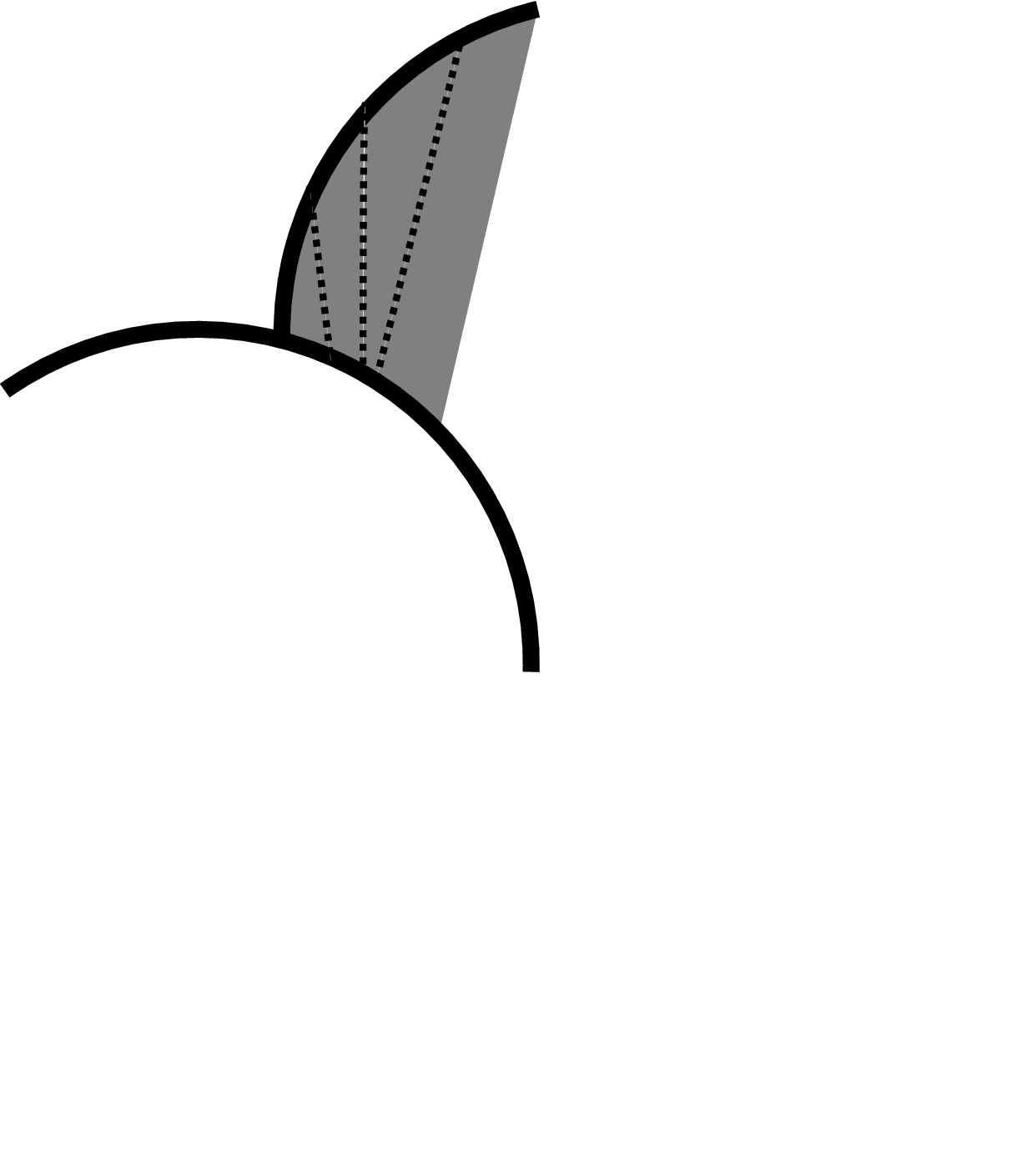}
	\caption{Three types of chords at $(D_{\mathcal{S}},x_{j^{\prime},0})$ for Theorem \ref{thm:3} (\ref{thm:3.1.2}).}
	\label{fig:6}
\end{figure}

\begin{Thm}
\label{thm:4}The local change of the Poincar\'e-Reeb V-digraph of $D_{\mathcal{S}}$ for ${\pi}_{2,1,1}$ to that of $D_{{\mathcal{S}}^{\prime}}$ for ${\pi}_{2,1,1}$ in the situation of  Theorem \ref{thm:3} is as follows.
\begin{enumerate}
	\item \label{thm:4.1}
	In the situation of Theorem \ref{thm:3} {\rm (}\ref{thm:3.1.1}{\rm )} or {\rm (}\ref{thm:3.1.2}{\rm )} with Theorem \ref{thm:3} {\rm (}\ref{thm:3.2}{\rm )}, we can argue as Theorem \ref{thm:2} {\rm (}\ref{thm:2.2}{\rm )}.
	\item \label{thm:4.2} In the situation of Theorem \ref{thm:3} {\rm (}\ref{thm:3.1.3}{\rm )} or {\rm (}\ref{thm:3.1.4}{\rm )} with Theorem \ref{thm:3} {\rm (}\ref{thm:3.2}{\rm )}, we can argue as Theorem \ref{thm:2} {\rm (}\ref{thm:2.1}{\rm )}.
\end{enumerate}
\end{Thm}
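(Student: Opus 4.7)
The plan is to reduce the analysis to the single-circle situation already handled in Theorem \ref{thm:2} by observing that the local change of the Poincar\'e-Reeb V-digraph depends only on (i) which of the cases of Proposition \ref{prop:2} the chord $C_{D_{\mathcal{S}},x_{j^{\prime},0}}$ falls into, and (ii) how the new supporting arc $S_{\mathcal{S},x_{j^{\prime},0}}$ (or its complement) sits in $\overline{D_{\mathcal{S}}}$ relative to the vertical foliation given by ${\pi}_{2,1,1}$. The extra hypothesis of Theorem \ref{thm:3} (\ref{thm:3.2}) guarantees a straight line $L_{x_{j^{\prime},0}}$ meeting $\overline{D_{\mathcal{S}}}$ only at $x_{j^{\prime},0}$; this plays the same role as the ``convexity at a boundary point'' used in Theorem \ref{thm:1} (\ref{thm:1.1}) and gives a supported (or unsupported) sufficiently small CS-region at $(D_{\mathcal{S}},x_{j^{\prime},0})$ whose addition to $\mathcal{S}$ is a legitimate inductive step in the sense of Definition \ref{def:3}.

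First I would fix a sufficiently small supported CS-region at $(D_{\mathcal{S}},x_{j^{\prime},0})$ and translate, as in the proof notes of Theorem \ref{thm:2}, the change in ${\pi}_{2,1,1} {\mid}_{\overline{D_{\mathcal{S}}}}$ into a change of its Reeb/Poincar\'e-Reeb data. The effect is concentrated on the unique cell of $G_{D_{\mathcal{S}},1}$ containing $q_{D_{\mathcal{S}},1}(x_{j^{\prime},0})$, because the new circle is close to the chord and the supported CS-region is contained in an arbitrarily small neighborhood of $x_{j^{\prime},0}$; outside such a neighborhood the projection ${\pi}_{2,1,1}$ is unchanged and Ehresmann's theorem (used already in \S\ref{subsec:2.2}) preserves the fibered structure.

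Next I would split by the sign data from Theorem \ref{thm:3}. In cases (\ref{thm:3.1.3}) and (\ref{thm:3.1.4}), Theorem \ref{thm:3} (\ref{thm:3.1}) forces the chord $C_{D_{\mathcal{S}},x_{j^{\prime},0}}$ to be of the type of Proposition \ref{prop:2} (\ref{prop:2.1}), hence $p_{1,2,1} \neq 0$ throughout; ${\pi}_{2,1,1}$ restricted to the new arc has no critical point locally, so no new local extremum of ${\pi}_{2,1,1} {\mid}_{\overline{D_{{\mathcal{S}}^{\prime}}}}$ is produced and the combinatorial modification is exactly the ``two new interior vertices'' change of Theorem \ref{thm:2} (\ref{thm:2.1.1}) or the ``two new vertices on adjacent oriented edges'' change of Theorem \ref{thm:2} (\ref{thm:2.1.2}); the choice between the two sub-cases is dictated by whether $q_{D_{\mathcal{S}},1}(x_{j^{\prime},0})$ lay in the interior of an edge or was already a vertex. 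In cases (\ref{thm:3.1.1}) and (\ref{thm:3.1.2}), Theorem \ref{thm:3} (\ref{thm:3.1}) allows chords of the type of Proposition \ref{prop:2} (\ref{prop:2.3}), so one may realize $p_{1,2,1}=0$ and thereby create a new local minimum (or maximum) of ${\pi}_{2,1,1}$ on $\overline{D_{{\mathcal{S}}^{\prime}}}$; the rest of the argument is then identical to the case analysis of Theorem \ref{thm:2} (\ref{thm:2.2}), so the combinatorial modification reproduces either the tree-of-two-edges picture of (\ref{thm:2.2.1}) or the triangle picture of (\ref{thm:2.2.2}), and both are realizable since both sub-classes of sufficiently small (un)supported CS-regions are non-empty.

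The main obstacle I anticipate is book-keeping of the sign data at the intersection point $x_{j^{\prime},0} \in S_{j_1} \cap S_{j_2}$. Unlike Theorem \ref{thm:1}, here $\overline{D_{\mathcal{S}}}$ has a corner at $x_{j^{\prime},0}$ formed by arcs of two different circles, so one must check that the ``signs at $(D_{\mathcal{S}},x_{j^{\prime},0})$'' really are independent of the choices of supporting secant and supporting subsets (this is precisely the content of the sentences preceding Theorem \ref{thm:3}), and that those signs determine the four possibilities for the tangent direction of the new chord as stated. Once this is verified in the style of the FIGUREs \ref{fig:5} and \ref{fig:6} accompanying Theorem \ref{thm:3}, the matching with Theorem \ref{thm:2} is purely formal, and Theorem \ref{thm:4} follows.
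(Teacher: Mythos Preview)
Your overall plan---reduce to Theorem \ref{thm:2} by reading off which case of Proposition \ref{prop:2} the chord falls into and then invoking the already-proved local graph changes---is exactly the paper's approach. The paper's proof is little more than ``investigate carefully one by one,'' so your write-up is in fact more detailed than the original.

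Two points deserve correction. First, in your treatment of (\ref{thm:4.2}) you assert that in cases (\ref{thm:3.1.3}) and (\ref{thm:3.1.4}) the chord is forced to be of the type of Proposition \ref{prop:2} (\ref{prop:2.1}). That is only true in (\ref{thm:3.1.4}); in (\ref{thm:3.1.3}) Theorem \ref{thm:3} explicitly allows all cases of Proposition \ref{prop:2} (\ref{prop:2.2}), including $p_{1,2,2}=0$. Your conclusion $p_{1,2,1}\neq 0$ and the resulting match with Theorem \ref{thm:2} (\ref{thm:2.1}) are nonetheless correct, since Theorem \ref{thm:2} (\ref{thm:2.1}) covers both (\ref{prop:2.1}) and (\ref{prop:2.2}); only the justification is misstated.

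Second, and more substantively, in (\ref{thm:4.1}) you argue only for chords of type (\ref{prop:2.3}) and conclude that ``the rest of the argument is then identical to'' Theorem \ref{thm:2} (\ref{thm:2.2}). But in case (\ref{thm:3.1.1}) chords of type (\ref{prop:2.2}) with $p_{1,2,2}=0$ are also admissible, and this sub-case has no counterpart in Theorem \ref{thm:2} (the point $x_{j^{\prime},0}$ there is a single-circle pole, not a corner of two arcs). The paper singles this out as the one genuinely new ingredient: for such a horizontal chord the local change is actually of the Theorem \ref{thm:2} (\ref{thm:2.1}) kind (argue as in Proposition \ref{prop:2} (\ref{prop:2.1})), not (\ref{thm:2.2}). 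You should add a sentence covering this exceptional chord in (\ref{thm:3.1.1}); without it the case analysis in (\ref{thm:4.1}) is incomplete.
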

\begin{proof}[Notes on our proof of Theorem \ref{thm:4}]
	We give notes on our proof of Theorem \ref{thm:4}.

	The case (\ref{thm:4.1}) is shown by investigating carefully as ever. We only note that in the case the chord at $(D_{\mathcal{S}},x_{j^{\prime},0})$ is as in Proposition \ref{prop:2} {\rm (}\ref{prop:2.2}{\rm )} with $p_{1,2,2}=0$, we can argue as Proposition \ref{prop:2} (\ref{prop:2.1}). We have not encountered cases similar to this case and this must be considered in the case Theorem \ref{thm:3} (\ref{thm:3.1.1}) with (\ref{thm:3.2}). See also FIGURE \ref{fig:5}.  
	
		The case (\ref{thm:4.2}) is easily shown by investigating carefully as ever.
	\end{proof}
The following is a variant of \cite[Theorem 5]{kitazawa3} where \cite[Theorem 5]{kitazawa3} is for MBCC arrangements. 
\begin{Thm}
\label{thm:5}
The local change of the Poincar\'e-Reeb V-digraph of $D_{\mathcal{S}}$ for ${\pi}_{2,1,1}$ to that of $D_{{\mathcal{S}}^{\prime}}$ for ${\pi}_{2,1,1}$ in the situation of Proposition \ref{prop:2} {\rm (}\ref{prop:2.3}{\rm )} with Theorem \ref{thm:1} {\rm (}\ref{thm:1.2}{\rm )}  is same as the case of \cite[Theorem 5 with Problem 1]{kitazawa3} where for the numbers "$i(v_1)=i(v_2)$ there", sufficiently close to ${\pi}_{2,1,1}(x_{j^{\prime},0})$, the relation $i(v_1)>i(v_2)$ or $i(v_1)<i(v_2)$ may hold instead. We can have all cases for the relation between the numbers $i(v_1)$ and $i(v_2)$, which are sufficiently close to ${\pi}_{2,1,1}(x_{j^{\prime},0})$ and both of which are either smaller or larger than ${\pi}_{2,1,1}(x_{j^{\prime},0})$.
\end{Thm}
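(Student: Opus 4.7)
The plan is to reduce the statement to the already-proved MBCC version, \cite[Theorem 5]{kitazawa3}, by first showing that in a sufficiently small neighborhood of $x_{j^{\prime},0}$ the combinatorial type of the local change only depends on how the new circle $S_{x_{j^{\prime}},r_{j^{\prime}}}:=S_{\mathcal{S}}$ crosses the circle $S_j$, and then tracking the values of $V_{D_{{\mathcal{S}}^{\prime}},1}$ at the two newly created vertices as the chord $C_{D_{\mathcal{S}},x_{j^{\prime},0}}$ varies over the three sub-cases of Proposition \ref{prop:2} (\ref{prop:2.3}). Concretely, I would first fix Theorem \ref{thm:1} (\ref{thm:1.2}): the restricted class of supported CS-regions guarantees that ${\mathbb{R}}^2-D_{x_{j^{\prime}},r_{j^{\prime}}}$ is the correct choice for $E_{x_{j^{\prime}},r_{j^{\prime}}}$, so the circular secant $S_{\mathcal{S}}$ locally pinches off $D_{\mathcal{S}}$ near $x_{j^{\prime},0}$ along $C_{D_{\mathcal{S}},x_{j^{\prime},0}}$. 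This reproduces exactly the local picture of FIGURE \ref{fig:4}.

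Next, I would identify the two new vertices. Let $x_{j^{\prime},0,3}$ and $x_{j^{\prime},0,4}$ be the chord boundary points; in the Poincar\'e-Reeb V-digraph they appear as the two candidate vertices $v_1,v_2$ with $V_{D_{{\mathcal{S}}^{\prime}},1}(v_k)={\pi}_{2,1,1}(x_{j^{\prime},0,k+2})$ for $k=1,2$. In the MBCC setting of \cite[Theorem 5]{kitazawa3}, the new circle is centered on $S_j$ with $E_{x_{j^{\prime}},r_{j^{\prime}}}={\mathbb{R}}^2-D_{x_{j^{\prime}},r_{j^{\prime}}}$, which forces the chord to be perpendicular to $S_j$ at the vertical pole $x_{j^{\prime},0}$, i.e.\ horizontal; both boundary points therefore share the $x$-coordinate ${\pi}_{2,1,1}(x_{j^{\prime},0})$, yielding $i(v_1)=i(v_2)$. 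In our broader SSC-NI setting, the secant $S_{\mathcal{S}}$ is only required to pass through $x_{j^{\prime},0,3}$ and $x_{j^{\prime},0,4}$, and by Proposition \ref{prop:2} (\ref{prop:2.3}) the tuple $(p_{1,2,1},p_{1,2,2})$ of the chord realizes all three sign patterns. The case $p_{1,2,1}=0$ reproduces $i(v_1)=i(v_2)$ exactly as in \cite[Theorem 5]{kitazawa3}; the cases $p_{1,2,1}>0$ and $p_{1,2,1}<0$ force, respectively, $i(v_1)<i(v_2)$ and $i(v_1)>i(v_2)$ after labeling $v_1,v_2$ by the order of the chord boundary points on $S_j$. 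Since $x_{j^{\prime},0}$ is a vertical pole, both $x_{j^{\prime},0,3}$ and $x_{j^{\prime},0,4}$ lie on one side of the vertical tangent line, so both $i(v_1)$ and $i(v_2)$ are simultaneously smaller than, or simultaneously larger than, ${\pi}_{2,1,1}(x_{j^{\prime},0})$, as claimed.

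Finally, I would verify that apart from the values of $V_{D_{{\mathcal{S}}^{\prime}},1}$ at $v_1,v_2$, the structure of the V-digraph change (which edges are created, subdivided, and how they are oriented) is formally identical to the MBCC case, by arguing that the relative position of $S_{\mathcal{S}}$ with respect to $S_j$ in $D^{\prime}_{x_{j^{\prime},0}}$ determines the adjacency pattern, and this pattern is invariant under perturbing the chord within the three sub-cases. The main obstacle will be the bookkeeping in this last step: one must rule out the appearance of any extra singular fibers of ${\pi}_{2,1,1}{\mid}_{\overline{D_{{\mathcal{S}}^{\prime}}}}$ near $x_{j^{\prime},0}$, other than those at $v_1,v_2$ and at ${v_e}^{\prime}$ (using the notation analogous to Theorem \ref{thm:2}). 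This is handled by making the supported CS-region sufficiently small so that, by Ehresmann's theorem applied to ${\pi}_{2,1,1}{\mid}_{\overline{D_{{\mathcal{S}}^{\prime}}}}$ away from the chord boundary points and the horizontal poles that enter the new region, the rest of the Poincar\'e-Reeb V-digraph is unchanged, exactly as in the corresponding step of \cite[Theorem 5]{kitazawa3}.
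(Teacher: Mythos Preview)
Your approach is essentially the paper's own: both reduce to the graph-change description of \cite[Theorem 5 with Problem 1]{kitazawa3} and then invoke the three sub-cases of Proposition \ref{prop:2} (\ref{prop:2.3}) on the chord at $(D_{\mathcal{S}},x_{j^{\prime},0})$ to realize all three order relations between $i(v_1)$ and $i(v_2)$, with FIGURE \ref{fig:4} supplying the governing local picture. The paper's ``Notes on our proof'' in fact restate the combinatorial recipe from \cite{kitazawa3} almost verbatim and then appeal to Proposition \ref{prop:2} (\ref{prop:2.3}) exactly as you do, so there is no substantive difference in strategy.

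One small geometric slip to fix: in the MBCC case the segment joining the two intersection points of the small circle (centered at the vertical pole $x_{j^{\prime},0}$) with $S_j$ is \emph{vertical}, i.e.\ parallel to the tangent of $S_j$ there, not horizontal as you write. What forces the two intersection points to share an $x$-coordinate is the reflection symmetry of the configuration about the horizontal normal line through $x_{j^{\prime},0}$, and that shared $x$-coordinate is strictly on one side of $\pi_{2,1,1}(x_{j^{\prime},0})$, not equal to it. Similarly, in your final Ehresmann step the extra vertices for $\pi_{2,1,1}$ arise from \emph{vertical} poles of the new circle, not horizontal ones. These are phrasing errors only and do not affect the validity of your argument.
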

\begin{proof}[Notes on our proof of Theorem \ref{thm:5}]
For the graphs we need, we can understand the definition by checking \cite{kitazawa3}. More precisely, we only need to check around \cite[Theorem 5 with Problem 1]{kitazawa3} where we do not assume non-trivial mathematical arguments in the definition and formulation. 

However, we give our related exposition essentially same as the original one.

Let $x_{j^{\prime},0}$ be a vertical pole where the restriction of ${\pi}_{2,1}$ to the boundary of $\overline{D_{\mathcal{S}}}-D_{\mathcal{S}}$ has a local maximum.

There exist exactly two edges $e_{x_{j^{\prime},0},1,-}$ and $e_{x_{j^{\prime},0},2,-}$ which enter $q_{D_{\mathcal{S}},1}(x_{j^{\prime},0})$ and the preimages of the closures of which for the quotient map contain $x_{j^{\prime},0}$.
There exist exactly one edge $e_{x_{j^{\prime},0},+}$ which departs from $q_{D_{\mathcal{S}},1}(x_{j^{\prime},0})$ and the preimage of the closure of which for the quotient map contains $x_{j^{\prime},0}$.
For the remaining edges entering or departing from $q_{D_{\mathcal{S}},1}(x_{j^{\prime},0})$, we change the vertex they enter or depart from.
For each of the two edges $e_{x_{j^{\prime},0},i,-}$ ($i=1,2$), we add two adjacent vertices $v_{x_{j^{\prime},0},i,-,0}$ and $v_{x_{j^{\prime},0},i,-}$ in the interior and sufficiently close to $q_{D_{\mathcal{S}},1}(x_{j^{\prime},0})$. We also define the vertices in such a way that $v_{x_{j^{\prime},0},i,-,0}$ is closer to $q_{D_{\mathcal{S}},1}(x_{j^{\prime},0})$ than $v_{x_{j^{\prime},0},i,-}$. For the remaining edges entering $q_{D_{\mathcal{S}},1}(x_{j^{\prime},0})$, we change the vertex they enter to $v_{x_{j^{\prime},0},1,-,0}$ or $v_{x_{j^{\prime},0},2,-,0}$, according to the location of the preimages for the quotient map, in ${\mathbb{R}}^2$.
For the edges departing from $q_{D_{\mathcal{S}},1}(x_{j^{\prime},0})$ except $e_{x_{j^{\prime},0},+}$, we change the vertex they depart from to $v_{x_{j^{\prime},0},1,-,0}$ or $v_{x_{j^{\prime},0},2,-,0}$, according to the location of the preimages for the quotient map, in ${\mathbb{R}}^2$.

In addition, respecting the structures of our digraphs, we define the structure of the new V-digraph $G_{D_{{\mathcal{S}}^{\prime},1}}$ as the digraph associated with a new function $l_{G_{D_{{\mathcal{S}}^{\prime},1}}}:=V_{D_{{\mathcal{S}}^{\prime}},1}$ on the vertex set whose values at $v_{x_{j^{\prime},0},i,-,0}$ are ${\pi}_{2,1,1}(x_{j^{\prime},0})$, whose values at $v_{x_{j^{\prime},0},i,-}$ are $i(v_i)<{\pi}_{2,1,1}(x_{j^{\prime},0})$, whose value at $q_{D_{\mathcal{S}},1}(x_{j^{\prime},0})$ is changed into a real number greater than  ${\pi}_{2,1,1}(x_{j^{\prime},0})$ and sufficiently close to it, and whose values at the remaining vertices are same as those of the {\rm (}originally associated{\rm )} function $l_{G_{D_{\mathcal{S},1}}}:=V_{D_{{\mathcal{S}}},1}$.

FIGURE \ref{fig:4} shows an example for the case $i(v_1)>i(v_2)$: we can have the cases $i(v_1)<i(v_2)$ and $i(v_1)=i(v_2)$ by considering chords at $(D_{\mathcal{S}},x_{j^{\prime},0})$, according to Proposition \ref{prop:2} (\ref{prop:2.3}) with Theorem \ref{thm:1} (\ref{thm:1.2}).

Let $x_{j^{\prime},0}$ be a vertical pole where the restriction of ${\pi}_{2,1}$ to the boundary of $\overline{D_{\mathcal{S}}}-D_{\mathcal{S}}$
has a local minimum. In this case, we can argue in the same way.

This completes our exposition on the proof.
\end{proof}
For Theorems \ref{thm:1}--\ref{thm:5}, we have another similar result for the Poincar\'e-Reeb V-digraph of $D_{\mathcal{S}}$ for ${\pi}_{2,1,2}$ and that of $D_{{\mathcal{S}}^{\prime}}$ for ${\pi}_{2,1,2}$ by a kind of symmetries. This is also discussed in \cite{kitazawa3}.
\begin{Def}
\label{def:6}
In Definition \ref{def:3} and in the step (\ref{def:3.2}) there, 
we consider the operations in Theorems \ref{thm:1} and \ref{thm:3}. This defines an {\it NI arrangement supported by small chords} ({\it SSC-NI arrangement}).
\end{Def}
The following is a short corollary and we can check this by our construction.
\begin{Cor}
An SSC-NI arrangement is also an NCI arrangement of circles and MBC arrangement.
\end{Cor}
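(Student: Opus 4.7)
The plan is to proceed by induction on the number $n$ of circles added in step (\ref{def:3.2}) of Definition \ref{def:3} beyond the initial set $\mathcal{S}_0$, and to read off both conclusions directly from the preservation clauses already packaged inside Theorems \ref{thm:1} and \ref{thm:3}.

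For the base case $n=0$, note that $\mathcal{S}_0$ is a set of mutually disjoint circles (by Definition \ref{def:3} (\ref{def:3.1})). Therefore there are no intersection points at all among circles of $\mathcal{S}_0$, so conditions (2) and (3) of Definition \ref{def:2} are vacuously satisfied (no pair of circles meets, so no transversality or pole-avoidance is violated), while condition (1) holds by the choice of $D_{\mathcal{S}_0}$. The connectedness clause defining the NCI class in Definition \ref{def:4} is also vacuous at $n=0$, since no circle has yet been inserted by step (\ref{def:3.2}).

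For the inductive step, assume $(\mathcal{S},D_{\mathcal{S}})$ after $n$ insertions is simultaneously an NCI arrangement and an MBC arrangement. By Definition \ref{def:6}, the $(n{+}1)$-st insertion follows one of the prescriptions in Theorem \ref{thm:1} or Theorem \ref{thm:3}. In each sub-case of Theorem \ref{thm:1} (namely (\ref{thm:1.1}) and (\ref{thm:1.2})) the statement itself asserts that the NCI property and the MBC property are both inherited by $(\mathcal{S}',D_{\mathcal{S}'})$. Theorem \ref{thm:3} (\ref{thm:3.2}) explicitly refers the reader to the discussion of Theorem \ref{thm:1} (\ref{thm:1.1}), so the same preservation applies there. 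Combining the inductive hypothesis with these preservation statements immediately supplies both conclusions at step $n{+}1$.

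The point that must be spelled out is why the preservation actually holds in every case, since the corollary hinges on it. The circle $S_{x_{j'},r_{j'}}$ inserted by Theorem \ref{thm:1} or \ref{thm:3} is, by construction, a circular secant passing through the chord boundary points of a chord of $\mathcal{S}$; Proposition \ref{prop:2} (and the tangent-space condition imposed on circular secants in the definition) guarantees transversality with the two circles it meets and avoidance of horizontal and vertical poles, giving the MBC conditions. Taking the CS-region sufficiently small forces $D_{x_{j'},r_{j'}}$ to intersect $\bigcup_{S\in\mathcal{S}} S$ in an arbitrarily short piece of one or two circles of $\mathcal{S}$, which is connected and non-empty, giving the NCI connectedness clause. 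The main obstacle, and the place requiring real care, is to confirm this connectedness in the concave case of Theorem \ref{thm:1} (\ref{thm:1.2}): here one must use the extra restriction imposed there on the class of supported CS-regions (requiring them to contain the distinguished bounded CS-region of $\mathcal{S}$ through $x_{j',0}$) to ensure that the intersection $\bigcup_{S\in\mathcal{S}} S \cap D_{x_{j'},r_{j'}}$ is a single short arc rather than two disjoint arcs. Once this case is in hand, the induction closes.
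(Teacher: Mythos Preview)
Your proposal is correct and follows essentially the same approach the paper has in mind: the paper gives no argument beyond the single line ``we can check this by our construction,'' and your induction on the number of inserted circles, invoking the NCI- and MBC-preservation clauses already stated in Theorems~\ref{thm:1} and \ref{thm:3}, is exactly the construction being referred to. One minor wording issue: in the base case, condition~(2) of Definition~\ref{def:2} (that each circle meets $\overline{D_{\mathcal{S}_0}}$) is not vacuous but is satisfied because $\partial\overline{D_{\mathcal{S}_0}}$ is by hypothesis the disjoint union of all circles in $\mathcal{S}_0$; only condition~(3) is vacuous.
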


\begin{Ex}
\label{ex:1}
\cite[Main Theorems 4 and 5 and FIGURE 8]{kitazawa2} has mainly motivated us to introduce SSC-NI arrangements and \cite[Example 2 and FIGURE 8]{kitazawa3} shows a simplest case of SSC-NI arrangements.
\end{Ex}
\begin{Ex}
\label{ex:2}
	Another simplest example $(\mathcal{S},D_{\mathcal{S}})$ of SSC-NI is shown in FIGURE \ref{fig:7}. The set $\mathcal{S}$ consists of exactly $3$ circles where the initial set ${\mathcal{S}}_0$ of circles, defined in Definition \ref{def:3}, consists of just one circle: the circle in the left is added according to Proposition \ref{prop:2} (\ref{prop:2.1}) and after that another circle, depicted partially by an arc, is added according to Theorem \ref{thm:3} (\ref{thm:3.1.4}) with (\ref{thm:3.2}).
\begin{figure}
	\includegraphics[width=80mm,height=80mm]{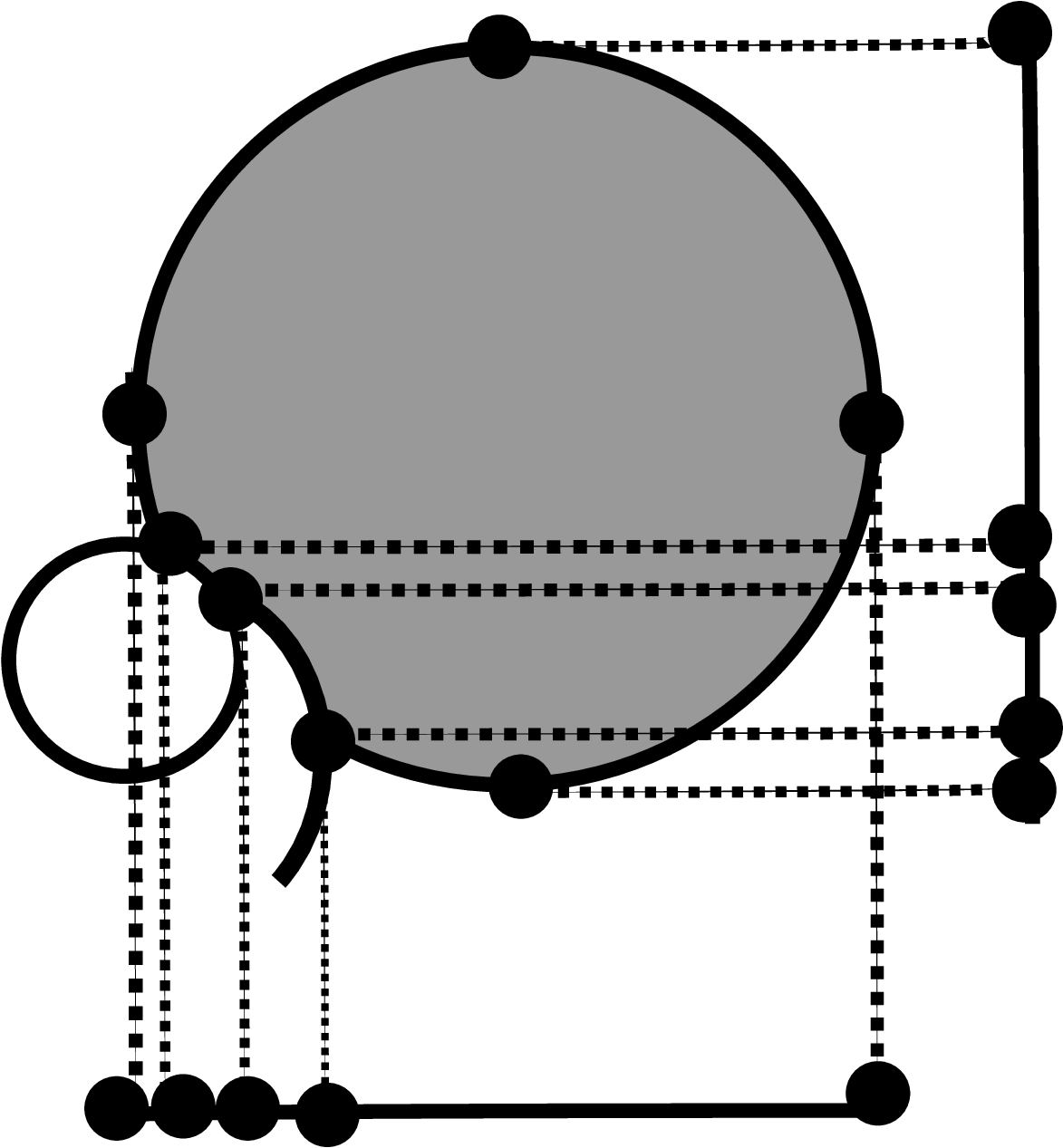}
	\caption{Example \ref{ex:2}: the set $\mathcal{S}$ consists of exactly $3$ circles where the initial set ${\mathcal{S}}_0$ consists of one circle and two circles are added one after another.}
	\label{fig:7}
\end{figure}

The Poincar\'e-Reeb V-digraph of $D_{\mathcal{S}}$ for ${\pi}_{2,1,i}$ is isomorphic to a graph with exactly five vertices and homeomorphic to the interval $D^1$ for $i=1,2$.
If we check \cite{kitazawa3}, especially, \cite[Theorems 2 and 3]{kitazawa3}, then we can see that the pair of Poincar\'e-Reeb graphs in this case is not realized by considering MBCC arrangements only.
\end{Ex}
We close our paper by presenting related problems. Similar problems are also presented in \cite{kitazawa3}.
\begin{Prob}
Find and formulate other important explicit classes of NI arrangements of circles. Investigate the classes from the viewpoint of combinatorics, singularity theory of smooth functions and maps, and real algebraic geometry, for example.
\end{Prob}
The following is an explicit related problem. 
\begin{Prob}[\cite{kitazawa1, kitazawa2}]
For a given finite V-digraph, can we reconstruct nice and explicit real algebraic functions whose Reeb V-digraphs are isomorphic to this.
\end{Prob}
This is also presented in \cite{kitazawa3}. Originally, this is regarded as a problem in the differentiable (smooth) category (\cite{sharko}). For related studies in the differentiable (smooth) category, check \cite{saeki1} for example and check also \cite{kitazawa3} again.
\section{Conflict of interest and Data availability.}
\noindent {\bf Conflict of interest.} \\
The author works at Institute of Mathematics for Industry (https://www.jgmi.kyushu-u.ac.jp/en/about/young-mentors/) and this is closely related to our study. Our study thanks them for the supports. The author is also a researcher at Osaka Central
Advanced Mathematical Institute (OCAMI researcher), supported by MEXT Promotion of Distinctive Joint Research Center Program JPMXP0723833165. Although he is not employed there, our study also thanks this. \\
\ \\
{\bf Data availability.} \\
Data essentially supporting our present study are all in the present
 paper.

\end{document}